\documentclass[11pt,a4paper]{article}

\usepackage[T1]{fontenc}
\usepackage[utf8]{inputenc}
\usepackage[scaled=0.92]{helvet}
\usepackage{courier}
\usepackage{microtype}
\usepackage[a4paper,margin=2.65cm]{geometry}
\usepackage{amsmath,amssymb,amsthm,mathtools,bm}
\usepackage{booktabs,array,tabularx}
\usepackage{enumitem}
\usepackage{xcolor}
\usepackage[hypertexnames=false]{hyperref}
\usepackage[nameinlink,noabbrev]{cleveref}

\definecolor{preprintblue}{RGB}{25,62,96}
\hypersetup{
  colorlinks=true,
  linkcolor=preprintblue,
  citecolor=preprintblue,
  urlcolor=preprintblue,
  pdfauthor={Leyang Wang},
  pdftitle={Vanishing boundary geometry and critical pressure compactness},
  pdfsubject={Boundary partial regularity for the three-dimensional Navier--Stokes equations},
  pdfkeywords={Navier--Stokes equations, boundary partial regularity, Sobolev multipliers, blow-up method, pressure excess, irregular domains},
  pdflang={en}
}

\setlist{leftmargin=2.2em,itemsep=0.15em,topsep=0.3em}
\allowdisplaybreaks[3]
\numberwithin{equation}{section}

\newtheorem{theorem}{Theorem}[section]
\newtheorem{proposition}[theorem]{Proposition}
\newtheorem{lemma}[theorem]{Lemma}
\newtheorem{corollary}[theorem]{Corollary}
\theoremstyle{definition}
\newtheorem{definition}[theorem]{Definition}

\theoremstyle{remark}
\newtheorem{remark}[theorem]{Remark}

\newcommand{\R}{\mathbb R}
\newcommand{\N}{\mathbb N}
\newcommand{\Hh}{\mathbb H}

\newcommand{\M}{\mathcal M}
\newcommand{\eps}{\varepsilon}
\newcommand{\dd}{\,\mathrm d}
\newcommand{\diver}{\operatorname{div}}

\newcommand{\dist}{\operatorname{dist}}

\newcommand{\Id}{\mathrm I}
\newcommand{\para}{\mathrm{para}}
\newcommand{\fintx}{\mathchoice{\mkern13mu\ooalign{$\displaystyle\int$\cr\hidewidth$\displaystyle-$\hidewidth\cr}}{\mkern7mu\ooalign{$\textstyle\int$\cr\hidewidth$\textstyle-$\hidewidth\cr}}{\mkern7mu\ooalign{$\scriptstyle\int$\cr\hidewidth$\scriptstyle-$\hidewidth\cr}}{\mkern7mu\ooalign{$\scriptscriptstyle\int$\cr\hidewidth$\scriptscriptstyle-$\hidewidth\cr}}}
\newcommand{\norm}[2]{\left\lVert #1\right\rVert_{#2}}

\newcommand{\ManuscriptAuthor}{Leyang Wang}
\newcommand{\ManuscriptAffiliation}{Mathematics and Science College, Shanghai Normal University, Shanghai 200234, China}
\newcommand{\ManuscriptEmail}{1000446516@smail.shnu.edu.cn}

\title{\bfseries Vanishing boundary geometry and\\ critical pressure compactness
for the \\three-dimensional Navier--Stokes equations}
\author{\ManuscriptAuthor\thanks{Corresponding author. E-mail:
\href{mailto:\ManuscriptEmail}{\texttt{\ManuscriptEmail}}.}\\[0.25em]
\small \ManuscriptAffiliation}
\date{}

\begin{document}

\maketitle

\begin{abstract}
We develop a boundary blow-up scheme for the three-dimensional nonstationary
Navier--Stokes equations in domains whose local boundary graphs belong to
$W^{2-1/P_b,P_b}(\R^2)$ with $P_b>3$. The argument is designed to remove the
$P_b>15/4$ restriction in the boundary partial-regularity theorem of Breit.
After a rigid rotation to the tangent plane and parabolic rescaling, such a
graph becomes small simultaneously in the multiplier classes
$\M^{4/3,3/2}$ and $\M^{16/15,15/14}$ at the rate $r^{1-3/P_b}$.
A two-parameter contradiction argument then sends both the fluid excess and
the geometric multiplier norm to zero, producing the standard flat Stokes
system in the limit. At the critical pressure pair $(5/3,15/14)$, a localized
flat-Stokes decomposition separates a strongly vanishing error pressure from
forced and homogeneous pressures. Their decay exponents are respectively
$6-15/P_f$ and $12/5-9/Q$, where $P_f>5/2$ and $Q>15/4$. Rough-coefficient
errors are absorbed only at the critical multiplier level; higher spatial
integrability is invoked only for flat Stokes problems. The resulting
Campanato iteration gives boundary H\"older regularity outside a relatively
closed set of zero parabolic $5/3$-dimensional Hausdorff measure.
\end{abstract}

\noindent\textbf{Keywords:} Navier--Stokes equations; boundary partial regularity;
Sobolev multipliers; blow-up method; pressure excess; irregular domains.

\noindent\textbf{2020 Mathematics Subject Classification:}
35Q30; 35B65; 35B44; 76D05.

\section{Introduction}

The modern weak-solution theory of the three-dimensional Navier--Stokes
equations begins with Leray and Hopf \cite{Leray1934,Hopf1951}, while Serrin's
criterion is a classical benchmark for conditional regularity
\cite{Serrin1962}.  The interior
partial-regularity program was initiated by Scheffer
\cite{Scheffer1976,Scheffer1977,Scheffer1980} and reached its scale-invariant
form in the theorem of Caffarelli, Kohn and Nirenberg \cite{CKN}.  Subsequent
proofs and refinements clarified the blow-up, compactness and singular-set
mechanisms; see, among others,
\cite{Struwe1988,Lin1998,LadyzhenskayaSeregin1999,ChoeLewis,
GustafsonKangTsai2007,Vasseur2007}.

Boundary partial regularity is substantially more delicate than its interior
counterpart. Besides the pressure, which is nonlocal, one must control the
coefficients generated by flattening the boundary and transform the local
energy inequality without losing the natural energy scale.  The half-space
and smooth-boundary theory was developed through the works of Scheffer,
Seregin, Kang and their collaborators
\cite{Scheffer1982,Seregin2000,Seregin2002,Seregin2003,Kang2004,
GustafsonKangTsai2006,SereginShilkinSolonnikov,Wolf2010}; later endpoint and
near-endpoint boundary criteria include \cite{Barker2017}.  Breit
\cite{Breit2025} recently established an existence and boundary
partial-regularity theory in considerably rougher domains, using Sobolev
multipliers to encode the regularity of the boundary charts.

The linear analytic background combines half-space Stokes estimates and
maximal regularity \cite{Solonnikov1968,Solonnikov1977,Ukai1987,
GigaSohr1991,DeschHieberPruess2001,FarwigSohr1994,Solonnikov2003,Sohr2001}
with the
divergence-solving machinery of Bogovski\u{\i} and its negative-order extension
\cite{Bogovskii1979,GeissertHeckHieber2006}.  Estimates for the Stokes operator
on Lipschitz domains \cite{BrownShen1995} provide a useful comparison with the
multiplier-domain framework used here.  Local pressure formulations, although
not used as a substitute for the present Stokes decomposition, give a related
perspective on pressure localization \cite{Wolf2017}.

In Breit's theorem the boundary graph belongs locally to
\[
  W^{2-1/p,p}(\R^2), \qquad p>\frac{15}{4}.
\]
The number $15/4$ appears in the blow-up argument when the pressure gradient is
estimated in $L_t^{5/3}L_x^p$ and then converted into decay of an
$L^{5/3}$-based pressure excess. It is not the threshold at which the boundary
becomes differentiable. Indeed,
\[
 W^{2-1/p,p}(\R^2)\hookrightarrow C^{1,\,1-3/p}(\R^2)
 \quad\text{as soon as }p>3.
\]
This suggests that the geometric threshold should be $p>3$, provided that the
pressure argument is reorganized.

The present manuscript develops such a reorganization. Its central principle
is that the rescaled geometry should not be inserted additively into the fluid
excess. Geometry is instead treated as an independent small parameter. In a
contradiction sequence, the geometric threshold and the fluid amplitude are
sent to zero simultaneously. The limiting coefficients are therefore the
identity, and one can use flat Stokes estimates rather than a regularity theory
for a fixed perturbed limit system.

The main contributions are as follows.

\begin{enumerate}
  \item We prove that a tangent-plane normalized
  $W^{2-1/P_b,P_b}$ graph with $P_b>3$ becomes small at the rate
  $r^{1-3/P_b}$ in both multiplier spaces required by the argument:
  $\M^{4/3,3/2}$ for the local energy inequality and
  $\M^{16/15,15/14}$ for the critical pressure estimate.

  \item We formulate a one-step excess-decay lemma with the correct order of
  quantifiers. Its decay constant is independent of the contraction radius,
  whereas the geometric and fluid smallness thresholds are allowed to depend
  on that radius.

  \item We isolate the pressure mechanism at the critical pair
  $(5/3,15/14)$. After localization, the nonlinear and coefficient errors
  generate a pressure that is small by
  $W^{1,15/14}(\R^3_+)\hookrightarrow L^{5/3}(\R^3_+)$. The forced flat
  pressure decays with exponent $6-15/P_f$, while the flat homogeneous
  pressure decays with exponent $12/5-9/Q$.

  \item We show that the geometric exponent $1-3/P_b$ determines only the
  first scale at which the iteration can start. It does not restrict the
  final Campanato exponent. The bad-point covering remains at parabolic
  dimension $5/3$.
\end{enumerate}

The paper is organized as follows. \Cref{sec:setting} gives the equations,
the multiplier classes and the excess. \Cref{sec:multiplier} proves the
cross-exponent multiplier lemma. \Cref{sec:flattening} records the consequences
for the flattened coefficients. \Cref{sec:blowup} states and proves the
one-step decay lemma using the critical-pressure separation proved in
\Cref{sec:pressure}. The physical-scale iteration, interior--boundary
patching and singular-set estimate are given in
\Cref{sec:iteration,sec:singular}. Finally,
\Cref{sec:dimension} explains why an additive geometric excess has the wrong
dimension.

\section{Setting and main statement}\label{sec:setting}

Let $I=(0,T)$ and let $\Omega\subset\R^3$ be bounded. We consider
\begin{equation}\label{eq:NS}
\begin{aligned}
 \partial_t u +(u\cdot\nabla)u-\Delta u+\nabla\pi &= f,
   &&\text{in }I\times\Omega,\\
 \diver u&=0,
   &&\text{in }I\times\Omega,\\
 u&=0,
   &&\text{on }I\times\partial\Omega.
\end{aligned}
\end{equation}
We use the parabolic cylinders
\[
  Q_r(z_0)=I_r(t_0)\times B_r(x_0),
  \qquad I_r(t_0)=(t_0-r^2,t_0),
\]
also denoted by $Q_r^-(z_0)$ when the backward orientation must be emphasized,
and write $Q_r^+=I_r\times B_r^+$ in flat coordinates. Averages are denoted
by $\fintx_E h=|E|^{-1}\int_Eh$.

\subsection{Sobolev multipliers}

For $s\ge1$ and $1<p<\infty$, the boundary multiplier norm is
\begin{equation}\label{eq:multiplier-definition}
 \norm{\varphi}{\M^{s,p}(\R^2)}
 :=\sup_{\norm{v}{W^{s-1,p}(\R^2)}=1}
 \norm{\nabla\varphi\cdot v}{W^{s-1,p}(\R^2)}.
\end{equation}
This is the form used in \cite{Breit2025,MazyaShaposhnikova}; only derivatives
of the boundary graph enter the transformed equations.

We distinguish the boundary exponent $P_b$ from the force exponent $P_f$:
\begin{equation}\label{eq:exponents}
  P_b>3, \qquad P_f>\frac52,
  \qquad \gamma_b:=1-\frac3{P_b}>0,
  \qquad \mu_f:=6-\frac{15}{P_f}>0.
\end{equation}
The two multiplier levels are
\begin{equation}\label{eq:two-levels}
 (s_1,p_1)=\left(\frac43,\frac32\right),
 \qquad
 (s_0,p_0)=\left(\frac{16}{15},\frac{15}{14}\right).
\end{equation}
The first level controls the geometric term in the local energy inequality;
the second is the trace level associated with
$W^{2,15/14}$ Stokes maximal regularity.

\subsection{Boundary charts}

We initially assume that $\partial\Omega$ admits a finite graph atlas
$\{\varphi_\ell\}_{\ell=1}^N$ on disks of a common radius $2r_a$ such that
\begin{equation}\label{eq:graph}
 \varphi_\ell\in W^{2-1/P_b,P_b}(B_{2r_a}'),
 \qquad \Omega=\{(x',x_3):x_3>\varphi_\ell(x')\}
\end{equation}
in the corresponding orthonormal coordinates, and
\begin{equation}\label{eq:atlas-bound}
 \max_{1\le\ell\le N}
 \norm{\varphi_\ell}{W^{2-1/P_b,P_b}(B_{2r_a}')}
 \le M_b.
\end{equation}
At each $x_0\in\partial\Omega$ we then translate and apply a rigid rotation
$Q_{x_0}\in SO(3)$ so that $x_0$ becomes the origin and the tangent plane is
$\{x_3=0\}$. The uniform validity of this regraphing, including its
$W^{2-1/P_b,P_b}$ bound, is proved in
\Cref{lem:tangent-regraphing}. The rotation is essential: subtracting the
tangent plane by a nonorthogonal shear would leave a constant anisotropic
principal operator in the limit.

\subsection{The excess}

For a boundary cylinder, set
\begin{equation}\label{eq:excess}
\begin{aligned}
 E_r(u,\pi)
 &:={\fintx}_{Q_r^+}|u|^3
 \\
 &\quad
 +r^3\left(
 {\fintx}_{I_r}{\fintx}_{B_r^+}
 |\pi-(\pi)_{B_r^+}(t)|^{5/3}\dd x\dd t
 \right)^{9/5}.
\end{aligned}
\end{equation}
Under the Navier--Stokes scaling
\begin{equation}\label{eq:NS-scaling}
\begin{aligned}
 u_r(t,x)&=r u(t_0+r^2t,x_0+rx),\\
 \pi_r(t,x)&=r^2\pi(t_0+r^2t,x_0+rx),\\
 f_r(t,x)&=r^3f(t_0+r^2t,x_0+rx).
\end{aligned}
\end{equation}
one has
\begin{equation}\label{eq:excess-scaling}
 E_1(u_r,\pi_r)=r^3E_r(u,\pi),
 \qquad
 \norm{f_r}{L^{P_f}(Q_1^+)}^3
 =r^9\left({\fintx}_{Q_r^+}|f|^{P_f}\right)^{3/P_f}.
\end{equation}

\subsection{Boundary suitability and the main theorem}

We use the critical pair
\[
 r_*=\frac53,\qquad s_*=\frac{15}{14}.
\]
Write $W^{1,2}_{0,\diver}(\Omega)$ for the closure of compactly supported
solenoidal test fields in $W^{1,2}_0(\Omega)$ and
$W^{1,s_*}_\perp(\Omega)$ for the pressure space with a fixed spatial
normalization.

\begin{definition}[Boundary suitable weak solution]\label{def:suitable}
Let $f\in L^{r_*}(I;L^{s_*}(\Omega))$ and
$u_0\in W^{2,s_*}(\Omega)\cap W^{1,2}_{0,\diver}(\Omega)$. A pair $(u,\pi)$
is boundary suitable at the critical level if
\begin{align*}
 u&\in L^\infty(I;L^2(\Omega))\cap
 L^2(I;W^{1,2}_{0,\diver}(\Omega))
 \cap L^{r_*}(I;W^{2,s_*}(\Omega))
 \cap W^{1,r_*}(I;L^{s_*}(\Omega)),\\
 \pi&\in L^{r_*}(I;W^{1,s_*}_\perp(\Omega)),
\end{align*}
the equations and the initial condition $u(0)=u_0$ hold almost everywhere,
and the local energy inequality
\begin{align}\label{eq:local-energy}
 &\frac12\int_\Omega \zeta(t)|u(t)|^2\,\dd x
 +\int_0^t\!\int_\Omega \zeta|\nabla u|^2\,\dd x\dd\sigma\nonumber\\
 &\quad\le
 \int_0^t\!\int_\Omega
 \left\{\frac{|u|^2}{2}(\partial_t\zeta+\Delta\zeta)
 +\left(\frac{|u|^2}{2}+\pi\right)u\cdot\nabla\zeta
 +\zeta f\cdot u\right\}\dd x\dd\sigma
\end{align}
holds for every nonnegative smooth cutoff supported in a boundary chart and
for almost every $t\in I$. This is Breit's Definition~2.4 specialized to
$(r_*,s_*)=(5/3,15/14)$.
\end{definition}

\begin{theorem}[Boundary partial regularity]\label{thm:main}
Let $\Omega\subset\R^3$ be a bounded domain satisfying
\eqref{eq:atlas-bound} for some $P_b>3$. Let $P_f>5/2$ and
$f\in L^{P_f}(I\times\Omega)$. Assume that $(u,\pi)$ is boundary suitable in
the sense of \Cref{def:suitable}. Then there is a relatively closed set
\[
 \Sigma\subset I\times\partial\Omega
\]
such that
\begin{equation}\label{eq:hausdorff-main}
 \mathcal H^{5/3}_{\para}(\Sigma)=0.
\end{equation}
Moreover, $u$ is locally parabolically H\"older continuous near every point of
$(I\times\partial\Omega)\setminus\Sigma$. More precisely, for every
\begin{equation}\label{eq:holder-range}
 0<\alpha<\min\left\{2-\frac5{P_f},\frac45\right\}
\end{equation}
one may choose an excess exponent $\beta$ satisfying
$3\alpha<\beta<\min\{6-15/P_f,12/5\}$, and the corresponding Campanato
estimate implies $u\in C^{0,\alpha}_{\para}$ locally at every regular
boundary point.
\end{theorem}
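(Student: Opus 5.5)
The plan follows the blow-up/Campanato strategy described in the introduction. It has four stages: a multiplier smallness lemma, a one-step excess decay lemma proved by compactness, an iteration at the physical scale, and a covering argument for the singular set.

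\textbf{Step 1 (geometry).} At each $x_0\in\partial\Omega$ apply the rigid tangent-plane rotation and regraph, using the uniform bound of \Cref{lem:tangent-regraphing}. Rescale the graph by $\varphi_r(x')=r^{-1}\varphi(rx')$. Since $\nabla\varphi\in W^{1-1/P_b,P_b}\hookrightarrow C^{0,\gamma_b}$ and $\nabla\varphi(0)=0$, I expect $\norm{\varphi_r}{\M^{s,p}(B_2')}\lesssim M_b r^{\gamma_b}$ at both levels $(s_1,p_1)$ and $(s_0,p_0)$. The proof would use Maz'ya--Shaposhnikova-type sufficient conditions: a $W^{s-1,\infty}$-type or Besov bound on $\nabla\varphi_r$, obtained from the Hölder seminorm interpolated with the $W^{1-1/P_b,P_b}$ seminorm, whose scaling gives exactly $r^{1-3/P_b}$.

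Flattening via $x_3\mapsto x_3-\varphi_r(x')$ then produces a Stokes system $\partial_t v-\diver(A\nabla v)+B^T\nabla q=-\,(\text{convection})+g$. The coefficient perturbations $A-\Id$ and $B-\Id$ are controlled by the multiplier norm. The transformed local energy inequality acquires error terms bounded by $\M^{4/3,3/2}$ times energy quantities.

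\textbf{Step 2 (one-step decay).} The target statement is as follows. For each $\theta\in(0,1/4)$ there are $\eps_0(\theta)$ and $\delta_0(\theta)$ such that
\[
E_1+\norm{f}{L^{P_f}}^3\le\eps_0^{\,2}
\quad\text{and}\quad
\norm{\varphi}{\M^{4/3,3/2}}+\norm{\varphi}{\M^{16/15,15/14}}\le\delta_0
\]
imply
\[
E_\theta\le C_*\theta^{\beta}\bigl(E_1+\norm{f}{L^{P_f}}^3\bigr),
\]
with $C_*$ independent of $\theta$.

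The proof is by contradiction, taking sequences with $\eps_k,\delta_k\to0$. Normalize $v_k=u_k/\eps_k$ and $q_k=\pi_k/\eps_k$, and use the energy inequality to get uniform bounds in the energy class. Aubin--Lions gives strong $L^3$ convergence of $v_k$, and since $A_k,B_k\to\Id$ the limit solves the flat linear Stokes system in $Q_1^+$ with no-slip condition on $\{x_3=0\}$.

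The pressure does not converge strongly, and this is where \Cref{sec:pressure} is used. Localize with a cutoff and split
\[
q_k=q_k^{\rm err}+q_k^{\rm forced}+q_k^{\rm hom}.
\]
The error part collects nonlinear and coefficient terms; it is bounded in $W^{1,15/14}$ by $\eps_k+\delta_k\to0$, so by $W^{1,15/14}\hookrightarrow L^{5/3}$ it vanishes in the excess. The forced part comes from $f$ through flat Stokes maximal regularity in $L^{P_f}$ and decays like $\theta^{6-15/P_f}$. The homogeneous part uses flat half-space Stokes interior/boundary regularity in $L^Q$, $Q>15/4$, and decays like $\theta^{12/5-9/Q}$. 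The velocity term decays like $\theta^{3}$ by smoothness of the limit. Choosing $\beta<\min\{\mu_f,12/5\}$ contradicts the assumed failure for large $k$.

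I expect the pressure splitting to be the main obstacle. The nonlinear term only lies in $L^{5/3}L^{15/14}$, so strong vanishing must be extracted from the multiplier estimate at exactly the critical level. The homogeneous pressure needs a Campanato-type estimate that survives the mean subtraction in time slices.

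\textbf{Step 3 (iteration and singular set).} Fix $\theta$ so that $C_*\theta^{\beta-3\alpha'}\le1/2$ for the chosen rate. Then take $r_0$ so small that $M_b r_0^{\gamma_b}\le\delta_0(\theta)$; the multiplier norm only improves at smaller scales, so $\gamma_b$ enters only through $r_0$. Iterating at $r_0\theta^k$ (with the force term contributing $r^{9-15/P_f\cdot\ldots}$ via \eqref{eq:excess-scaling}) gives Campanato decay of $\fintx|u-(u)|^3$. This yields $C^{0,\alpha}_\para$ at the boundary, and patching with interior CKN regularity covers nearby interior points.

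For the singular set, define
\[
\Sigma=\Bigl\{z:\ \limsup_{r\to0} r^{-5/3}\!\int_{Q_r}\bigl(|u|^3+|\pi|^{5/3}+\ldots\bigr)>0\Bigr\}
\]
up to the criterion, which is relatively closed by openness of the smallness condition. Since $|u|^3+|\pi-(\pi)|^{5/3}\in L^1$ in the $5/3$-scaled form, a Vitali covering gives $\mathcal H^{5/3}_\para(\Sigma)=0$ by absolute continuity of the integral.
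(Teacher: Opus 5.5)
Your architecture is the paper's: rigid rotation, $r^{\gamma_b}$ smallness at both multiplier levels, a two-parameter contradiction with geometry as an independent hypothesis, the three-way pressure split with rates $6-15/P_f$ and $12/5-9/Q$, geometry fixing only the starting radius, patching and a covering argument. However, your flattening step fails as written.

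\textbf{The flattening.} With $x_3\mapsto x_3-\varphi_r(x')$, the coefficients $A,B$ are functions of $\nabla\varphi_r(x')$ alone. For $\varphi\in W^{2-1/P_b,P_b}$ the second derivatives $\nabla^2\varphi$ lie only in $W^{-1/P_b,P_b}$, so they are not functions. Consequently:
\begin{itemize}
\item $A-\Id$ and $B-\Id$ are not $W^{1,p}$-multipliers on the half-space. The boundary norm $\M^{s,p}(\R^2)$ only controls $\nabla\varphi$ acting on $W^{s-1,p}(\R^2)$ with $s-1\in\{1/3,1/15\}$, so it gives no bound in the norm $\mathfrak X(p)$ of \eqref{eq:coefficient-norm}.
\item $\nabla^2(u\circ\Phi)$ contains $\nabla^2\varphi\,\partial_3u$, so the transformed velocity leaves $L^{5/3}W^{2,15/14}$ and the critical estimate \eqref{eq:critical-local-maxreg} is unavailable.
\item The terms $\diver((A-\Id)\nabla U)$ and $\diver((\Id-B)P)$ cannot be placed in $L^{5/3}L^{15/14}$.
\end{itemize}
Your claim that the error pressure is $O(\eps_k+\delta_k)$ in $W^{1,15/14}$ is exactly the input for $W^{1,15/14}\hookrightarrow L^{5/3}$, and it therefore has no basis.

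The paper instead flattens with Breit's regularizing extension, $\Phi_r(\xi)=(\xi',\xi_3+T\widehat\psi_r(\xi',\xi_3/N))$, where $\norm{T\widehat\psi_r}{\M^{2,p}(\Hh)}\lesssim\norm{\widehat\psi_r}{\M^{2-1/p,p}(\R^2)}$ by \eqref{eq:extension}. This recovers the lost $1/p$ derivative and gives \Cref{lem:coefficients}. That lemma feeds both \eqref{eq:coefficient-errors-small} and the flux bounds \eqref{eq:divergence-data-audit}.

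Even with the right flattening, the $O(\kappa_k)$ bound on the coefficient errors needs a prior uniform critical bound on $\partial_tv_k$, $\nabla^2v_k$ and $\nabla q_k$ in $L^{5/3}L^{15/14}$. This is Breit's local estimate at $p=q=15/14$, and your outline omits it.

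\textbf{Three smaller repairs.}
\begin{enumerate}
\item The blow-up limit is not smooth. It is forced by the weak limit $g\in L^{P_f}$ of the normalized forces. One only has $|v|\lesssim x_3^\alpha$ with $\alpha<\min\{2-5/P_f,1\}$, obtained by splitting $v$ into a forced and a homogeneous flat part. The velocity excess therefore decays like $\theta^\nu$ with $\nu<\min\{\mu_f,3\}$, not like $\theta^3$. Your range for $\beta$ survives, but the argument must change.
\item In the singular-set step, $r^{-5/3}\int_{Q_r}|u|^3$ is not scale invariant. Failure of the $\eps$-criterion gives $r^{-2}\int_{Q_r}|u|^3\gtrsim\eps_*$, hence only $r^{-5/3}\int_{Q_r}|u|^3\gtrsim r^{1/3}$, and the covering does not close. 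You must use H\"older and $u\in L^{10/3}$ to reach $\int_{Q_r}(|u|^{10/3}+|\pi|^{5/3})\gtrsim r^{5/3}$, as in \eqref{eq:joint-density}.
\item In the multiplier lemma only your Besov option works. A bound through the H\"older seminorm of $\nabla\varphi_r$ alone controls multiplication on $W^{1/3,3/2}$ only when $\gamma_b>1/3$, i.e.\ $P_b>9/2$. The range $3<P_b\le 9/2$ needs the cross-exponent condition $\nabla\widehat\psi_r\in L^\infty\cap B^{a+\eps}_{2/a,p}$ with $2/a\in\{6,30\}$, together with the paraproduct estimate of \Cref{lem:fractional-product}.
\end{enumerate}
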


\begin{corollary}[Existence of a solution covered by \Cref{thm:main}]
\label{cor:existence}
Under the assumptions on $\Omega$, $P_b$, $P_f$ and $f$ in
\Cref{thm:main}, let
\[
 u_0\in W^{2,15/14}(\Omega)\cap W^{1,2}_{0,\diver}(\Omega).
\]
Then there exists a boundary suitable weak solution to which
\Cref{thm:main} applies.
\end{corollary}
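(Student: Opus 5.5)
The plan is to obtain the corollary from Breit's existence theorem \cite{Breit2025}, checking that its hypotheses hold under the present, weaker assumptions on the domain and the force. Breit constructs boundary suitable weak solutions in the sense of his Definition~2.4 for a general pair $(r,s)$, with regularity $L^{r}(I;W^{2,s})\cap W^{1,r}(I;L^{s})$ and pressure in $L^{r}(I;W^{1,s})$. The ingredients are the energy method, a Galerkin or regularized approximation, and maximal $L^r$--$L^s$ regularity of the Stokes problem in the multiplier domain. The first step is to specialize to $(r,s)=(r_*,s_*)=(5/3,15/14)$. By \Cref{def:suitable}, this is exactly the notion required in \Cref{thm:main}.

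The second step is to verify the data hypotheses. Since $\Omega$ is bounded and $P_f>5/2>5/3$, Hölder's inequality gives $f\in L^{P_f}(I\times\Omega)\subset L^{5/3}(I;L^{15/14}(\Omega))$. The initial datum $u_0\in W^{2,15/14}\cap W^{1,2}_{0,\diver}$ is exactly the assumed class, and it lies in the trace space of the maximal regularity class for the Stokes operator.

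The third step, which I expect to be the main obstacle, is the geometric hypothesis. Breit requires the boundary charts to lie in the multiplier class at the level where $W^{2,s_*}$ Stokes maximal regularity holds, namely $\M^{2-1/s_*,s_*}=\M^{16/15,15/14}$, with small multiplier norm, together with $\M^{4/3,3/2}$ for the energy-level transformation. The strategy is not to rely on his $P_b>15/4$ condition, since that threshold enters only in his partial-regularity blow-up and not in existence. Instead I would use \Cref{lem:tangent-regraphing} together with the cross-exponent multiplier lemma of \Cref{sec:multiplier}. For $P_b>3$ and sufficiently small chart radius, these show that the tangent-normalized local graphs are small in both $\M^{4/3,3/2}$ and $\M^{16/15,15/14}$, with norm $\lesssim M_b\, r^{1-3/P_b}$. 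Choosing a finite atlas with small enough radius, which is possible by compactness of $\partial\Omega$ and the uniform bound \eqref{eq:atlas-bound}, then gives the smallness Breit needs for the local Stokes estimates and for the Bogovski\u{\i} and negative-order divergence corrections.

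The construction then proceeds exactly as in \cite{Breit2025}. The local energy inequality \eqref{eq:local-energy} for chart-supported cutoffs is inherited in the limit from the approximate problems by the standard lower semicontinuity and strong $L^3$ compactness argument. The regularity $u\in L^{5/3}W^{2,15/14}$ makes every term meaningful. The solution produced is therefore boundary suitable, and \Cref{thm:main} applies to it.

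The delicate point to check is that no step in Breit's existence proof secretly uses $P_b>15/4$, for example through a higher-integrability pressure estimate. If such a step appears, I would replace it by the critical-level estimate, using only $W^{1,15/14}\hookrightarrow L^{5/3}$ and maximal regularity at $(5/3,15/14)$.
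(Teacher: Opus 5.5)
Your proposal follows the paper's proof. Both embed $L^{P_f}(I\times\Omega)$ into $L^{5/3}(I;L^{15/14}(\Omega))$, use \Cref{lem:tangent-regraphing} and \Cref{thm:cross-multiplier} to refine the atlas until its Lipschitz constants and $\M^{16/15,15/14}$ norms are below the threshold of Breit's Theorem~2.5, and then invoke that theorem. Your extra $\M^{4/3,3/2}$ requirement and your caution about hidden uses of $P_b>15/4$ are harmless but not needed. The one step you leave implicit is the passage from the rescaled charts $\widehat\psi_r$, where the rate is $r^{1-3/P_b}$, back to physical charts; the paper records this after \Cref{cor:uniform-threshold} with the weaker rate $r^{\gamma_b/2}$, which still gives the required smallness.
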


\begin{proof}
On a bounded cylinder, $L^{P_f}(I\times\Omega)$ embeds into
$L^{5/3}(I;L^{15/14}(\Omega))$. By
\Cref{lem:tangent-regraphing,thm:cross-multiplier}, the boundary atlas can be
refined until both its Lipschitz constants and its
$\M^{16/15,15/14}$ norms lie below the threshold in
\cite[Theorem 2.5]{Breit2025}. That theorem gives a solution satisfying
\Cref{def:suitable}; the asserted partial regularity then follows from
\Cref{thm:main}.
\end{proof}

\section{The vanishing cross-exponent multiplier lemma}\label{sec:multiplier}

This section contains the geometric lemma that makes the range $P_b>3$
available simultaneously at the energy and pressure levels.

\begin{lemma}[Uniform tangent-plane regraphing]\label{lem:tangent-regraphing}
Assume \eqref{eq:graph}--\eqref{eq:atlas-bound} with $P_b>3$. After reducing
$r_a$ and increasing $M_b$ by constants depending only on the original finite
atlas, every $x_0\in\partial\Omega$ has an orthonormal coordinate system in
which
\begin{equation}\label{eq:tangent-graph}
 \Omega=\{(y',y_3):y_3>\psi_{x_0}(y')\},\qquad
 \psi_{x_0}(0)=0,\quad \nabla\psi_{x_0}(0)=0,
\end{equation}
and
\begin{equation}\label{eq:tangent-atlas-bound}
 \sup_{x_0\in\partial\Omega}
 \norm{\psi_{x_0}}{W^{2-1/P_b,P_b}(B_{2r_a}')}
 \le M_b.
\end{equation}
\end{lemma}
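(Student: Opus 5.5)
The plan is to regraph each chart by an implicit-function argument after a rotation, and to show that the fractional Sobolev bound survives the change of variables.

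\textbf{Step 1: choose the rotation at each point.} Fix $x_0\in\partial\Omega$. Pick a chart $\varphi_\ell$ with $x_0=(x_0',\varphi_\ell(x_0'))$ and $|x_0'|\le r_a$. By the atlas covering (a Lebesgue-number argument for the finite atlas) such a chart exists. Since $P_b>3$, the embedding $W^{2-1/P_b,P_b}(\R^2)\hookrightarrow C^{1,\gamma_b}$ (after a fixed extension operator from $B_{2r_a}'$) gives
\[
\|\nabla\varphi_\ell\|_{C^{0,\gamma_b}}\le CM_b .
\]
Let $\nu$ be the unit normal $(-\nabla\varphi_\ell(x_0'),1)/\sqrt{1+|\nabla\varphi_\ell(x_0')|^2}$. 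Choose $Q_{x_0}\in SO(3)$ mapping $\nu$ to $e_3$. Set
\[
y=Q_{x_0}(x-x_0).
\]
The boundary is then locally the image of $x'\mapsto Q_{x_0}\bigl((x',\varphi_\ell(x'))-x_0\bigr)$. Define
\[
\Phi(x')=\bigl[Q_{x_0}((x',\varphi_\ell(x'))-x_0)\bigr]',
\]
the horizontal component. Then $D\Phi(x_0')$ is invertible. Its inverse is bounded by $C(1+CM_b)$, because the tilt of the tangent plane against $e_3$ is controlled by $\|\nabla\varphi_\ell\|_\infty$. Hölder continuity of $\nabla\varphi_\ell$ makes $D\Phi$ uniformly close to $D\Phi(x_0')$ on a ball of radius $\rho=c(M_b)$. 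The quantitative inverse function theorem then shows that $\Phi$ is a $C^1$ diffeomorphism onto a set containing $B_{2\rho}'$, with $\rho$ independent of $x_0$. Put
\[
\psi_{x_0}(y')=\bigl[Q_{x_0}((\Phi^{-1}(y'),\varphi_\ell(\Phi^{-1}(y')))-x_0)\bigr]_3 .
\]
This gives \eqref{eq:tangent-graph}: $\psi_{x_0}(0)=0$, and $\nabla\psi_{x_0}(0)=0$ by the choice of $\nu$. The domain $\Omega$ lies above the graph because $Q_{x_0}\nu=e_3$ and the chart orientation is preserved near $x_0$. Replacing $r_a$ by $\rho/2$ accounts for the reduction of $r_a$.

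\textbf{Step 2: transfer the Sobolev norm.} This is the main obstacle. Composition with $\Phi^{-1}$ must preserve $W^{2-1/P_b,P_b}$, that is, $\nabla\varphi\in W^{1-1/P_b,P_b}$. Only bi-Lipschitz control on $\Phi$ is available here. Since $\Phi$ is not assumed smooth, I will avoid composition lemmas. Instead I will express $\nabla\psi_{x_0}$ directly. With $A=Q_{x_0}$, one has
\[
\nabla\psi_{x_0}=G\bigl(\nabla\varphi_\ell\circ\Phi^{-1}\bigr),
\]
where $G$ is the explicit smooth rational function coming from the linear-fractional formula for the slope of a rotated plane. Its denominator is bounded below by a constant depending on $M_b$ on the relevant ball.

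Two facts then finish the estimate.
\begin{itemize}
\item Composition with a bi-Lipschitz map preserves $W^{\theta,p}$ for $0<\theta<1$. This follows from a change of variables in the Gagliardo seminorm, which costs factors of $\mathrm{Lip}(\Phi^{\pm1})$ only.
\item A smooth $G$ acting on bounded functions is Lipschitz, so it preserves the Gagliardo seminorm with constant $\|DG\|_\infty$.
\end{itemize}
Together they give $\|\nabla\psi_{x_0}\|_{W^{1-1/P_b,P_b}(B_{2\rho}')}\le C(M_b)$. The $L^{P_b}$ norm of $\psi_{x_0}$ itself is trivially bounded by the $C^1$ bound on a bounded disk.

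\textbf{Step 3: conclude uniformity.} All constants depend only on $M_b$, $P_b$, $r_a$ and the finite atlas, since finitely many charts are used. Enlarging $M_b$ to this constant $C(M_b)$ yields \eqref{eq:tangent-atlas-bound} uniformly in $x_0$.
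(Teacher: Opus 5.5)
Your proposal is correct and follows essentially the same route as the paper. You rotate the tangent plane to the horizontal, invert the planar component map using the uniform $C^{1,\gamma_b}$ modulus of $\nabla\varphi_\ell$, and write $\nabla\psi_{x_0}$ as a smooth rational function of $\nabla\varphi_\ell\circ\Phi^{-1}$ whose denominator is bounded below; the $W^{1-1/P_b,P_b}$ bound then transfers by bi-Lipschitz invariance of the Gagliardo seminorm and Lipschitz superposition, with your added detail on the quantitative inverse function theorem and the Lebesgue-number choice of chart being consistent with the paper's sketch.
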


\begin{proof}
Put $\sigma_b=1-1/P_b$. Since
$W^{\sigma_b,P_b}(\R^2)\hookrightarrow C^{0,1-3/P_b}$, every original chart
is $C^{1,\gamma_b}$. Fix a point of one chart and rotate its tangent plane to
$\{y_3=0\}$. If
\[
 X(\xi')=(\xi',\varphi_\ell(\xi')),
\]
the first two components of the rotated map $X(\xi')-X(\xi'_0)$ define a
planar map $F$. Its derivative at $\xi'_0$ is invertible, and the uniform
H\"older modulus of $\nabla\varphi_\ell$ makes $F$ uniformly bi-Lipschitz on
a smaller disk. The new height function has the form $\psi=G\circ F^{-1}$.

The chain rule expresses $\nabla\psi$ as a smooth rational function of
$\nabla\varphi_\ell$, composed with $F^{-1}$; its denominator stays uniformly
away from zero. The Slobodeckij seminorm is stable under bi-Lipschitz changes
of variables, and $W^{\sigma_b,P_b}\cap L^\infty$ is stable under smooth
Lipschitz compositions. Hence
\[
 \norm{\nabla\psi}{W^{\sigma_b,P_b}}
 \le C\bigl(1+\norm{\nabla\varphi_\ell}{L^\infty}\bigr)^C
 \norm{\nabla\varphi_\ell}{W^{\sigma_b,P_b}}.
\]
The rotation sends the tangent plane to the horizontal plane, so
$\nabla\psi(0)=0$. Translation gives $\psi(0)=0$. Uniformity follows from the
finiteness of the original atlas.
\end{proof}

Let $\chi\in C_c^\infty(B_2')$ satisfy $\chi=1$ on $B_1'$. For a normalized
graph $\psi$ as in \eqref{eq:tangent-graph}, define
\begin{equation}\label{eq:scaled-chart}
 \psi_r(y'):=\frac{\psi(ry')}{r},
 \qquad \widehat\psi_r:=\chi\psi_r.
\end{equation}

\begin{lemma}[A cross-exponent fractional product estimate]
\label{lem:fractional-product}
Let $0<a<1$, $1<p<\infty$ with $ap<2$, let $\rho=2/a$, and let
$\varepsilon>0$. Then
\begin{equation}\label{eq:fractional-product}
 \norm{Fv}{W^{a,p}(\R^2)}
 \le C\left(\norm{F}{L^\infty(\R^2)}
       \norm{F}{B^{a+\varepsilon}_{\rho,p}(\R^2)}\right)
       \norm{v}{W^{a,p}(\R^2)}
\end{equation}
for $F\in L^\infty\cap B^{a+\varepsilon}_{\rho,p}$ and
$v\in W^{a,p}(\R^2)$.
\end{lemma}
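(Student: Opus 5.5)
The plan is to estimate the Slobodeckij norm of $Fv$ directly, separating the part where $F$ acts as a bounded multiplier from the part where the oscillation of $F$ must be paid for by the integrability of $v$. I read the constant in \eqref{eq:fractional-product} as depending on the displayed quantity $\norm{F}{L^\infty}$, $\norm{F}{B^{a+\varepsilon}_{\rho,p}}$. The argument below in fact gives a bound linear in $\norm{F}{L^\infty}+\norm{F}{B^{a+\varepsilon}_{\rho,p}}$. A literal product would be inconsistent under $F\mapsto\lambda F$, whereas the linear bound is what is used later.

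For the $L^p$ part, $\norm{Fv}{L^p}\le\norm{F}{L^\infty}\norm{v}{L^p}$. For the seminorm I would write
\[
 F(x)v(x)-F(y)v(y)=F(x)\bigl(v(x)-v(y)\bigr)+v(y)\bigl(F(x)-F(y)\bigr).
\]
The first term contributes at most $\norm{F}{L^\infty}[v]_{W^{a,p}}$. For the second term, substitute $x=y+h$ and split the increments into $|h|\ge1$ and $|h|<1$. On $|h|\ge1$ the bound $|F(y+h)-F(y)|\le2\norm{F}{L^\infty}$ and the integrability of $|h|^{-2-ap}$ at infinity give a contribution $C\norm{F}{L^\infty}\norm{v}{L^p}$. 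This handles the large increments without any global integrability of $F$.

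The essential piece is the contribution of $|h|<1$:
\[
 \int_{\R^2}|v(y)|^p\,G(y)^p\dd y,
 \qquad
 G(y)^p:=\int_{|h|<1}\frac{|F(y+h)-F(y)|^p}{|h|^{2+ap}}\dd h .
\]
Since $ap<2$, the fractional Sobolev embedding gives $W^{a,p}(\R^2)\hookrightarrow L^q(\R^2)$ with $1/q=1/p-a/2$. The choice $\rho=2/a$ is exactly the one making $1/p=1/q+1/\rho$. Hölder's inequality with exponents $q/p$ and $\rho/p$ then bounds the integral by $\norm{v}{L^q}^p\norm{G}{L^\rho}^p$.

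It remains to control $\norm{G}{L^\rho}$, which I expect to be the main step, since $G$ is a Triebel--Lizorkin-type square function with mismatched exponents ($L^\rho$ outside, $\ell^p$-type integration inside). Here $\rho>p$, again because $ap<2$. Minkowski's integral inequality in $L^{\rho/p}$ therefore gives
\[
 \norm{G}{L^\rho}^p\le\int_{|h|<1}\frac{\norm{F(\cdot+h)-F}{L^\rho}^p}{|h|^{2+ap}}\dd h
 \le\int_{|h|<1}\frac{\norm{F(\cdot+h)-F}{L^\rho}^p}{|h|^{2+(a+\varepsilon)p}}\dd h .
\]
The last integral is bounded by $\norm{F}{B^{a+\varepsilon}_{\rho,p}}^p$ through the difference characterization of Besov spaces, valid since $0<a+\varepsilon<1$ after shrinking $\varepsilon$. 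The extra $\varepsilon$ is thus harmless: it only costs the restriction to $|h|<1$, which we have already made.

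Collecting the three contributions, together with $\norm{v}{L^q}\le C\norm{v}{W^{a,p}}$, yields \eqref{eq:fractional-product}.
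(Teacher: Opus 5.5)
Your proof is correct, but it takes a different route from the paper's.

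\textbf{The paper's route.} The paper works on the frequency side with Bony's decomposition $Fv=T_Fv+T_vF+R(F,v)$. The low--high term is bounded by $\norm{F}{L^\infty}\norm{v}{B^a_{p,p}}$. The high--low and resonant terms are bounded block by block, using H\"older with $1/p=1/\rho+1/p_a$ and $W^{a,p}\hookrightarrow L^{p_a}$. The margin $\varepsilon$ is used to sum a dyadic convolution.

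\textbf{Your route.} You work on the physical side with the Slobodeckij seminorm. The splitting $F(x)(v(x)-v(y))+v(y)(F(x)-F(y))$ plays the role of $T_Fv$ versus $T_vF+R(F,v)$. Minkowski's inequality in $L^{\rho/p}$ replaces the dyadic summation; it is legitimate because $\rho=2/a>p$. The exponent bookkeeping through the fractional Sobolev embedding is the same as in the paper.

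\textbf{What each buys.} Your version is elementary and self-contained, with no Littlewood--Paley theory. It also shows that $\varepsilon$ is superfluous when $0<a<1$. The small-increment integral is already bounded by $\norm{F}{B^a_{\rho,p}}\le C\norm{F}{B^{a+\varepsilon}_{\rho,p}}$. So inserting $|h|^{-\varepsilon p}$ only serves to match the statement. As you note, it forces $a+\varepsilon<1$, which is harmless by monotonicity of Besov spaces in the smoothness index. The paraproduct route, in turn, generalizes more readily to smoothness $a\ge1$ or to Triebel--Lizorkin scales, where first differences no longer characterize the norm.

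\textbf{The constant.} Your reading is the right one. The paper's own sketch produces the sum $\norm{F}{L^\infty}+\norm{F}{B^{a+\varepsilon}_{\rho,p}}$, and this sum is exactly what gives the rate $r^{\gamma_b}$ in \Cref{thm:cross-multiplier}. The literal product in the display is inconsistent under $F\mapsto\lambda F$.
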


\begin{proof}
Use an inhomogeneous Littlewood--Paley decomposition and write the product by
Bony's decomposition $Fv=T_Fv+T_vF+R(F,v)$. The low--high term satisfies
\[
 \norm{T_Fv}{B^a_{p,p}}\le C\norm{F}{L^\infty}\norm{v}{B^a_{p,p}}.
\]
Let $p_a$ be defined by $1/p_a=1/p-a/2$. Sobolev embedding gives
$B^a_{p,p}=W^{a,p}\hookrightarrow L^{p_a}$ and
$1/p=1/\rho+1/p_a$. H\"older's inequality on each dyadic block therefore
controls the high--low term and the resonant term by
\[
 C\norm{F}{B^{a+\varepsilon}_{\rho,p}}
   \norm{v}{B^a_{p,p}}.
\]
The positive $\varepsilon$ makes the remaining dyadic convolution summable.
This proves \eqref{eq:fractional-product}; see
\cite[Chapter~4]{RunstSickel} or \cite[Section~2.8]{Triebel} for the
underlying paraproduct estimates.
\end{proof}

\begin{theorem}[Vanishing cross-exponent Sobolev multiplier lemma]
\label{thm:cross-multiplier}
Let $P_b>3$ and let
$\psi\in W^{2-1/P_b,P_b}(B_{2r_a}')$ satisfy
$\psi(0)=0$ and $\nabla\psi(0)=0$. For $0<r<r_a$,
\begin{equation}\label{eq:cross-multiplier}
\begin{aligned}
 &\norm{\widehat\psi_r}{\M^{4/3,3/2}(\R^2)}
 +\norm{\widehat\psi_r}{\M^{16/15,15/14}(\R^2)}
 +\norm{\nabla\widehat\psi_r}{L^\infty(\R^2)}
 \\
 &\hspace{5em}\le
 C r^{\gamma_b}
 \norm{\psi}{W^{2-1/P_b,P_b}(B_{2r}')},
 \qquad \gamma_b=1-\frac3{P_b},
\end{aligned}
\end{equation}
where $C$ depends only on $P_b$ and $\chi$.
\end{theorem}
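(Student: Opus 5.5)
The plan is to reduce all three norms in \eqref{eq:cross-multiplier} to bounds on $F:=\nabla\widehat\psi_r$, and to exploit the scaling of the gradient. Since $\nabla\psi_r(y')=(\nabla\psi)(ry')$, the chart $\psi_r$ has the same gradient as $\psi$, only sampled on a shrunken disk. We have
\[
 F=\chi\,(\nabla\psi)(r\cdot)+\psi_r\nabla\chi .
\]
The Lipschitz part comes first. By \Cref{lem:tangent-regraphing} and the embedding $W^{1-1/P_b,P_b}\hookrightarrow C^{0,\gamma_b}$, applied after a bounded extension from $B'_{2r}$, the condition $\nabla\psi(0)=0$ gives
\[
 |\nabla\psi(ry')|\le C r^{\gamma_b}|y'|^{\gamma_b}[\nabla\psi]_{C^{0,\gamma_b}(B'_{2r})}
 \qquad\text{for } |y'|\le2 .
\]
Since $\psi(0)=0$, integration yields $|\psi_r(y')|\le Cr^{\gamma_b}$ on $B_2'$. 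Hence $\norm{F}{L^\infty}\le Cr^{\gamma_b}\norm{\psi}{W^{2-1/P_b,P_b}(B'_{2r})}$.

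The second step is a fractional seminorm bound for $F$. The quantity $[\nabla\psi(r\cdot)]_{W^{\sigma_b,P_b}(B_2')}$, with $\sigma_b=1-1/P_b$, scales like $r^{\sigma_b-2/P_b}=r^{\gamma_b}$ times $[\nabla\psi]_{W^{\sigma_b,P_b}(B'_{2r})}$. This power is exactly $\gamma_b$, which is why the exponent is critical. The $L^{P_b}$ part is controlled by the $L^\infty$ bound, and products with $\chi$ and $\nabla\chi$ are harmless. This gives
\[
 \norm{F}{W^{\sigma_b,P_b}(\R^2)}\le Cr^{\gamma_b}\norm{\psi}{W^{2-1/P_b,P_b}(B'_{2r})}.
\]
I would also record the interpolated Hölder bound $\norm{F}{C^{0,\gamma_b}}\le Cr^{\gamma_b}\norm{\psi}{W^{2-1/P_b,P_b}}$, which follows in the same way. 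Because $F$ has compact support in $B_2'$, it lies in $B^{\gamma_b}_{\infty,\infty}$ with this bound, and therefore in every $B^{\gamma_b-\delta}_{\rho,q}$.

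For the multiplier norms I apply \Cref{lem:fractional-product} with $a=s-1$.

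\textbf{Level $(4/3,3/2)$.} Here $a=1/3$, $p=3/2$, $ap=1/2<2$ and $\rho=6$. I need $F\in B^{1/3+\varepsilon}_{6,3/2}$, which follows from $F\in B^{\gamma_b-\delta}_{\infty,\infty}$ with compact support provided $1/3+\varepsilon<\gamma_b$. This holds when $P_b>9/2$.

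\textbf{Level $(16/15,15/14)$.} Here $a=1/15$, $p=15/14$ and $\rho=30$. This needs $\gamma_b>1/15$, which holds for $P_b>45/14$.

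In both cases the product estimate gives $\norm{Fv}{W^{a,p}}\le C(\norm{F}{L^\infty}+\norm{F}{B^{a+\varepsilon}_{\rho,p}})\norm{v}{W^{a,p}}$. I read the lemma's right side additively, as a sum of the two norms. Each term is $\le Cr^{\gamma_b}\norm{\psi}{}$, which closes the estimate in those ranges.

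The main obstacle is $3<P_b\le 9/2$ at the energy level, where $\gamma_b\le1/3$ and the Hölder route fails. There I would use the Sobolev regularity directly rather than the Hölder regularity. Compact support gives $F\in W^{\sigma_b,P_b}\hookrightarrow B^{\sigma_b}_{P_b,3/2}$ if $P_b\le 3/2$; otherwise I interpolate. Specifically, I embed $W^{\sigma_b,P_b}\hookrightarrow B^{\sigma_b-2/P_b+2/6}_{6,P_b}$ and need $\sigma_b-2/P_b+1/3>1/3$, that is, $\gamma_b>0$, which holds. The summability index is then fixed by trading an $\varepsilon$ of smoothness, using the compact support of $F$. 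Thus $\norm{F}{B^{1/3+\varepsilon}_{6,3/2}}\le C\norm{F}{W^{\sigma_b,P_b}}$ for small $\varepsilon$, because $\sigma_b-2/P_b+1/3=1/3+\gamma_b$. The same computation covers the pressure level: $\sigma_b-2/P_b+2/30=1/15+\gamma_b$.

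This unified Sobolev-embedding step, $W^{\sigma_b,P_b}\hookrightarrow B^{a+\gamma_b}_{2/a,\,p}$, is what makes the threshold exactly $P_b>3$. Combined with the scaled seminorm bound from the second step, it gives \eqref{eq:cross-multiplier}. The index $q=p$ is reached because the smoothness surplus $\gamma_b>0$ absorbs the change of summability index.
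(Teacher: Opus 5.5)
Your proof is correct and is essentially the paper's argument. The scaling of the critical $W^{1-1/P_b,P_b}$ seminorm and of the $C^{0,\gamma_b}$ modulus, combined with $\psi(0)=0$ and $\nabla\psi(0)=0$, gives the factor $r^{\gamma_b}$ for $\nabla\widehat\psi_r$. A strict-gap Besov embedding (differential index $a_j+\gamma_b$ against $a_j+\varepsilon$ at the gradient level) then places it in $B^{a_j+\varepsilon}_{2/a_j,p_j}$, and the fractional product lemma, read with a sum of the two norms as you correctly do, yields both multiplier bounds. The only difference is cosmetic: where $P_b>2/a_j$ the paper lowers integrability on the compact support, whereas you switch to the H\"older route, which covers exactly those ranges ($P_b>9/2$ at the energy level and $P_b>45/14$ at the pressure level) and is equally valid.
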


\begin{proof}
Set
\[
 s_b=2-\frac1{P_b},\qquad
 \sigma_b=s_b-1=1-\frac1{P_b}.
\]
Morrey's embedding in two dimensions gives
\[
 W^{\sigma_b,P_b}(B_{2r}')\hookrightarrow C^{0,\gamma_b}(B_{2r}'),
 \qquad \gamma_b=\sigma_b-\frac2{P_b}=1-\frac3{P_b}.
\]
Since $\nabla\psi(0)=0$,
\begin{equation}\label{eq:gradient-sup}
 \norm{\nabla\psi_r}{L^\infty(B_2')}
 =\sup_{B_2'}|\nabla\psi(ry')-\nabla\psi(0)|
 \le Cr^{\gamma_b}
 \norm{\nabla\psi}{W^{\sigma_b,P_b}(B_{2r}')}.
\end{equation}
The fractional seminorm scales exactly as
\begin{equation}\label{eq:fractional-scaling}
 [\nabla\psi_r]_{W^{\sigma_b,P_b}(B_2')}
 =r^{\sigma_b-2/P_b}
 [\nabla\psi]_{W^{\sigma_b,P_b}(B_{2r}')}
 =r^{\gamma_b}
 [\nabla\psi]_{W^{\sigma_b,P_b}(B_{2r}')}.
\end{equation}
Furthermore, $\psi(0)=0$ and \eqref{eq:gradient-sup} imply
$\norm{\psi_r}{L^\infty(B_2')}\le Cr^{\gamma_b}$ times the same local norm.
The product rule for a fixed smooth cutoff therefore yields
\begin{equation}\label{eq:scaled-W}
 \norm{\widehat\psi_r}{W^{2-1/P_b,P_b}(\R^2)}
 +\norm{\nabla\widehat\psi_r}{L^\infty(\R^2)}
 \le Cr^{\gamma_b}
 \norm{\psi}{W^{2-1/P_b,P_b}(B_{2r}')}.
\end{equation}

It remains to change both the differentiability and integrability indices.
Put $a_j=s_j-1$ and, for $j\in\{0,1\}$, define
\[
 \rho_j:=\frac{2}{a_j}.
\]
Thus $\rho_1=6$, $\rho_0=30$, and
\begin{equation}\label{eq:critical-Besov}
 \frac43-\frac26=1,
 \qquad
 \frac{16}{15}-\frac2{30}=1.
\end{equation}
On the other hand,
\begin{equation}\label{eq:source-Besov-index}
 s_b-\frac2{P_b}=2-\frac3{P_b}=1+\gamma_b>1.
\end{equation}
Fix $\varepsilon_b=\gamma_b/2$. The strict gap in
\eqref{eq:source-Besov-index}, together with compact support, gives the
strict Sobolev--Besov embeddings
\begin{equation}\label{eq:Besov-embeddings}
 B^{s_b}_{P_b,P_b}(\R^2)\hookrightarrow
 B^{4/3+\varepsilon_b}_{6,3/2}(\R^2)\cap
 B^{16/15+\varepsilon_b}_{30,15/14}(\R^2).
\end{equation}
Indeed, the source differential index is $1+\gamma_b$, whereas both target
indices are $1+\varepsilon_b$. If $P_b$ exceeds a target integrability
exponent, one first lowers integrability on the common bounded support; the
strict differentiability gap then permits the change of fine index. See
\cite{RunstSickel,Triebel}.

Apply \Cref{lem:fractional-product} componentwise to
$F=\nabla\widehat\psi_r$, with $(a,p,\rho)=(a_j,p_j,\rho_j)$.
Since differentiation maps
$B^{s_j+\varepsilon_b}_{\rho_j,p_j}$ into
$B^{a_j+\varepsilon_b}_{\rho_j,p_j}$, the definition
\eqref{eq:multiplier-definition} gives
\[
 \norm{\widehat\psi_r}{\M^{s_j,p_j}(\R^2)}
 \le C\left(
 \norm{\nabla\widehat\psi_r}{L^\infty}
 \norm{\widehat\psi_r}
 {B^{s_j+\varepsilon_b}_{\rho_j,p_j}}\right).
\]
Combining this bound with
\eqref{eq:scaled-W}--\eqref{eq:Besov-embeddings} proves
\eqref{eq:cross-multiplier}. The strict gap
$\varepsilon_b<\gamma_b$ is used only for the Besov embedding and for the
summability in \Cref{lem:fractional-product}; no endpoint multiplier
criterion is invoked.
\end{proof}

\begin{corollary}[Uniform geometric threshold]\label{cor:uniform-threshold}
Under \eqref{eq:atlas-bound}, define
\begin{equation}\label{eq:eta}
 \eta_{x_0}(r):=
 \norm{\widehat\psi_{x_0,r}}{\M^{4/3,3/2}}
 +\norm{\widehat\psi_{x_0,r}}{\M^{16/15,15/14}}
 +\norm{\nabla\widehat\psi_{x_0,r}}{L^\infty}.
\end{equation}
Then
\begin{equation}\label{eq:uniform-eta}
 \sup_{x_0\in\partial\Omega}\eta_{x_0}(r)
 \le CM_b r^{\gamma_b}.
\end{equation}
Consequently, for every $\delta_*>0$ there is
\begin{equation}\label{eq:R0}
 R_0\le \min\left\{r_a,
 \left(\frac{\delta_*}{CM_b}\right)^{1/\gamma_b}\right\}
\end{equation}
such that $\eta_{x_0}(r)\le\delta_*$ for every $x_0\in\partial\Omega$ and
$0<r<R_0$.
\end{corollary}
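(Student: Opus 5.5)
The corollary follows by combining \Cref{lem:tangent-regraphing} with \Cref{thm:cross-multiplier}; the only point needing care is uniformity in $x_0$. First, \Cref{lem:tangent-regraphing} provides, for every $x_0\in\partial\Omega$, an orthonormal frame in which $\partial\Omega$ is the graph of $\psi_{x_0}$ with $\psi_{x_0}(0)=0$ and $\nabla\psi_{x_0}(0)=0$. It also gives the bound \eqref{eq:tangent-atlas-bound}, which holds uniformly in $x_0$ after shrinking $r_a$ and enlarging $M_b$ by atlas-dependent constants. We absorb these constants into $C$ and keep writing $M_b$.

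Second, fix $x_0$ and $0<r<r_a$, and apply \Cref{thm:cross-multiplier} to $\psi=\psi_{x_0}$. Its hypotheses are exactly the normalizations just obtained. The left side of \eqref{eq:cross-multiplier} is precisely $\eta_{x_0}(r)$ from \eqref{eq:eta}, so
\[
 \eta_{x_0}(r)\le C r^{\gamma_b}\norm{\psi_{x_0}}{W^{2-1/P_b,P_b}(B_{2r}')}.
\]
Since $B_{2r}'\subset B_{2r_a}'$ and restriction to a subdisk does not increase the Slobodeckij norm, the right side is at most $C r^{\gamma_b}\norm{\psi_{x_0}}{W^{2-1/P_b,P_b}(B_{2r_a}')}\le CM_b r^{\gamma_b}$. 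The constant in \Cref{thm:cross-multiplier} depends only on $P_b$ and $\chi$, so it does not depend on $x_0$. Taking the supremum over $x_0$ gives \eqref{eq:uniform-eta}.

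Third, given $\delta_*>0$, set $R_0:=\min\{r_a,(\delta_*/(CM_b))^{1/\gamma_b}\}$; any smaller positive number would also do. Since $\gamma_b>0$, the map $r\mapsto CM_b r^{\gamma_b}$ is increasing. Hence for $0<r<R_0$ we get $\eta_{x_0}(r)\le CM_bR_0^{\gamma_b}\le\delta_*$ for every $x_0\in\partial\Omega$. The requirement $r<r_a$ is what keeps the disk $B_{2r}'$ inside the chart domain, so that \Cref{thm:cross-multiplier} applies.

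The only genuine obstacle is uniformity in $x_0$. It hinges on two facts. One is the uniform chart bound from \Cref{lem:tangent-regraphing}, which ultimately rests on the finiteness of the atlas and the uniform H\"older modulus of the gradients. The other is that the constant in \Cref{thm:cross-multiplier} does not depend on the particular graph. Both are already supplied by the earlier results, so the argument reduces to the chaining above.
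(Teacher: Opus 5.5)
Your proposal is correct and matches the paper's proof: the paper also obtains \eqref{eq:uniform-eta} by applying \Cref{thm:cross-multiplier} to the normalized graphs $\psi_{x_0}$ from \Cref{lem:tangent-regraphing} and using the uniform bound \eqref{eq:tangent-atlas-bound}, then reads off $R_0$ from the monotonicity of $r^{\gamma_b}$. The extra paragraph in the paper's proof on the physically rescaled charts $\widehat\psi^{\,\mathrm{phys}}_{x_0,r}$ serves only the existence corollary and is not needed for this statement.
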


\begin{proof}
The estimate \eqref{eq:uniform-eta} follows immediately from
\Cref{lem:tangent-regraphing,thm:cross-multiplier} and
\eqref{eq:tangent-atlas-bound}; \eqref{eq:R0} then gives the asserted
threshold.

For later use in the existence argument, we also record how this normalized
estimate returns to physical coordinates. Define
\[
 \widehat\psi^{\,\mathrm{phys}}_{x_0,r}(x')
 :=r\,\widehat\psi_{x_0,r}(x'/r).
\]
This function agrees with the tangent-plane graph near $x_0$ and has compact
support in the physical chart. At the critical Besov indices in
\eqref{eq:critical-Besov}, homogeneous rescaling is invariant. With the
positive margin $\varepsilon_b=\gamma_b/2$ used above, the rescaling factor
is $r^{-\varepsilon_b}$; hence
\[
 \norm{\widehat\psi^{\,\mathrm{phys}}_{x_0,r}}
 {\M^{s_j,p_j}(\R^2)}
 +\norm{\nabla\widehat\psi^{\,\mathrm{phys}}_{x_0,r}}{L^\infty}
 \le C M_b r^{\gamma_b-\varepsilon_b}
 =C M_b r^{\gamma_b/2}.
\]
Thus a finite physical atlas can also be refined until its graph
multiplier norms and Lipschitz constants are below any prescribed threshold.
\end{proof}

\section{Flattening and convergence of the coefficients}\label{sec:flattening}

Let $T$ be the boundary extension operator used in \cite{Breit2025}. Applied
to $\widehat\psi_r$, it produces an extension in the half-space satisfying
\begin{equation}\label{eq:extension}
 \norm{T\widehat\psi_r}{\M^{2,p}(\Hh)}
 \le C\norm{\widehat\psi_r}{\M^{2-1/p,p}(\R^2)},\qquad
 \norm{\nabla T\widehat\psi_r}{W^{1,P_b}(\Hh)}
 \le C\norm{\nabla\widehat\psi_r}
 {W^{1-1/P_b,P_b}(\R^2)}.
\end{equation}
Fix $N$ larger than the uniform Lipschitz constant required in the extension
construction and set
\[
 \Phi_r(\xi',\xi_3)
 :=\left(\xi',\,\xi_3+
 (T\widehat\psi_r)(\xi',\xi_3/N)\right),\qquad
 \Psi_r:=\Phi_r^{-1},\qquad J_r:=\det\nabla\Phi_r .
\]
For $r<R_0$, \Cref{cor:uniform-threshold} makes $\Phi_r$ uniformly
bi-Lipschitz and $J_r$ uniformly separated from zero. The transformed
momentum and divergence operators contain
\begin{equation}\label{eq:coefficients-piola}
 A_r=J_r(\nabla\Psi_r\circ\Phi_r)^\top
       (\nabla\Psi_r\circ\Phi_r),
 \qquad
 B_r=J_r(\nabla\Psi_r\circ\Phi_r).
\end{equation}
With the matrix-gradient and divergence conventions of
\cite[Section 4]{Breit2025}, this is the appropriate cofactor matrix and the
Piola identity gives
\begin{equation}\label{eq:piola}
 \diver B_r=0
 \qquad\text{in }\mathcal D'(\Hh).
\end{equation}
In particular, $\diver(B_rc(t))=0$ for every spatially constant pressure
$c(t)$, and
$B_r^\top:\nabla w=\diver(B_rw)$ for compactly supported vector fields $w$.
For the present proof the following convergence consequence is decisive.

For a matrix field $C$ and $1<p<\infty$, write
\begin{equation}\label{eq:coefficient-norm}
 \norm{C}{\mathfrak X(p)}:=\norm{C}{L^\infty}
 +\sup_{\norm{w}{W^{1,p}}=1}\norm{Cw}{W^{1,p}}.
\end{equation}

\begin{lemma}[Coefficient convergence]\label{lem:coefficients}
For each of the two levels in \eqref{eq:two-levels}, there is $C>0$ such that
\begin{equation}\label{eq:coefficient-convergence}
 \norm{J_r-1}{L^\infty}
 +\norm{A_r-\Id}{\mathfrak X(p_j)}
 +\norm{B_r-\Id}{\mathfrak X(p_j)}
 \le C\eta_{x_0}(r),
\end{equation}
where the matrix norm is understood entrywise. These are exactly the
$L^\infty$ and $W^{1,p_j}$ multiplier bounds used to estimate the three
coefficient errors in the transformed Stokes system. In particular, the
left-hand side tends to zero uniformly in $x_0$ as $r\downarrow0$.
\end{lemma}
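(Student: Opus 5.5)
Write $h_r:=T\widehat\psi_r$ and $\eta:=\eta_{x_0}(r)$. The strategy is to express the three coefficient errors as smooth functions of the single small matrix field $\nabla h_r$ that vanish at $\nabla h_r=0$. Each error is then controlled in the $\mathfrak X(p_j)$ norm by the two pieces of information available on $h_r$: its Lipschitz bound and its multiplier bound.

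\textbf{Step 1: the Jacobian.} From the definition of $\Phi_r$,
\[
 \nabla\Phi_r=\Id+e_3\otimes G_r,\qquad
 G_r=\bigl(\partial_1h_r,\partial_2h_r,N^{-1}\partial_3h_r\bigr)(\xi',\xi_3/N),
\]
so that $J_r=1+N^{-1}\partial_3h_r(\xi',\xi_3/N)$. The extension $T$ of \cite{Breit2025} is linear, and on bounded and Lipschitz functions it satisfies $\norm{\nabla T\phi}{L^\infty(\Hh)}\le C\norm{\nabla\phi}{L^\infty(\R^2)}$. This gives
\[
 \norm{J_r-1}{L^\infty}+\norm{G_r}{L^\infty}\le C\eta .
\]

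\textbf{Step 2: algebraic structure.} Since $\nabla\Phi_r$ is a rank-one perturbation of $\Id$, its inverse is explicit:
\[
 (\nabla\Phi_r)^{-1}=\Id-J_r^{-1}\,e_3\otimes G_r .
\]
Moreover $\nabla\Psi_r\circ\Phi_r=(\nabla\Phi_r)^{-1}$, so no composition with $\Psi_r$ actually occurs. Substituting into \eqref{eq:coefficients-piola}, $A_r-\Id$ and $B_r-\Id$ are polynomials in the entries of $G_r$ and in $J_r^{-1}$, and every monomial contains at least one factor of $G_r$. The $L^\infty$ parts of the $\mathfrak X(p_j)$ norms are therefore $\le C\eta$ by Step 1, because $J_r^{-1}$ is bounded once $\eta\le\delta_*$.

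\textbf{Step 3: the multiplier part.} The multiplier part of $\mathfrak X(p_j)$ is the operator norm of $w\mapsto Cw$ on $W^{1,p_j}(\Hh)$. Multiplier norms satisfy
\[
 \norm{FH}{MW^{1,p}}\le \norm{F}{MW^{1,p}}\norm{H}{MW^{1,p}},
\]
and for functions the space $MW^{1,p}$ is equivalent to $L^\infty\cap\{\nabla F\cdot\in MW^{1,p}\to L^p\}$. Hence it suffices to bound the two building blocks, $G_r$ and $J_r^{-1}$. For $G_r$, the extension bound \eqref{eq:extension} at exponent $p_j$, with $2-1/p_j=s_j$ (indeed $2-2/3=4/3$ and $2-14/15=16/15$), gives
\[
 \norm{h_r}{\M^{2,p_j}(\Hh)}\le C\norm{\widehat\psi_r}{\M^{s_j,p_j}}\le C\eta .
\]
This says exactly that $\nabla h_r$, and hence each entry of $G_r$, lies in $MW^{1,p_j}$ with norm $\le C\eta$; here we use that the anisotropic dilation $\xi_3\mapsto\xi_3/N$ is bi-Lipschitz with fixed constant. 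For $J_r^{-1}=\sum_k(-N^{-1}\partial_3h_r)^k$, a Neumann series converges in $MW^{1,p_j}$ once $\eta$ is small. Each polynomial error term thus has multiplier norm $\le C\eta$.

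\textbf{Main obstacle.} The main obstacle is the identification in Step 3 of the half-space multiplier norm $\M^{2,p}(\Hh)$, defined via $\nabla h$, with the statement that $\nabla h$ multiplies $W^{1,p}$ boundedly. This identification is needed to pass the Lipschitz bound and the multiplier bound on $\nabla h_r$ into the $\mathfrak X(p_j)$ norm, which is an operator norm on $W^{1,p_j}$. The plan is to follow Maz'ya--Shaposhnikova \cite{MazyaShaposhnikova}: $\norm{\nabla(\partial h\,w)}{L^p}\le\norm{\partial h}{L^\infty}\norm{\nabla w}{L^p}+\norm{\nabla^2h\,w}{L^p}$, and the second term is precisely what the $\M^{2,p}$ norm controls. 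Uniformity in $x_0$ then follows from \Cref{cor:uniform-threshold}.
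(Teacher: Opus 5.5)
Your proof is correct and follows the paper's route. You bound $\nabla T\widehat\psi_r$ in $L^\infty$ and in $\M^{2,p_j}(\Hh)$ via the extension estimate with $2-1/p_j=s_j$, invert $\nabla\Phi_r$, and close with the multiplier product rule. Your formula $(\nabla\Phi_r)^{-1}=\Id-J_r^{-1}e_3\otimes G_r$ is just the summed form of the paper's Neumann series, since $(e_3\otimes G_r)^k=(G_r)_3^{\,k-1}\,e_3\otimes G_r$.

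The step you flag as the main obstacle is immediate here. By \eqref{eq:multiplier-definition} with $s=2$, the $\M^{2,p}$ norm of $h_r$ is by definition the $W^{1,p}$-multiplier norm of $\nabla h_r$. That multiplier norm also dominates $\norm{\nabla h_r}{L^\infty}$, so Step 1 does not need a separate Lipschitz property of $T$.
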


\begin{proof}
The extension estimate \eqref{eq:extension} gives
\[
 \norm{T\widehat\psi_r}{\M^{2,p_j}(\Hh)}
 +\norm{\nabla T\widehat\psi_r}{L^\infty(\Hh)}
 \le C\eta_{x_0}(r).
\]
The entries of $\nabla\Phi_r-\Id$ are first derivatives of
$T\widehat\psi_r$. If their $L^\infty$ norm is below a universal threshold,
the inverse Jacobian is given by a convergent Neumann series in
$\nabla\Phi_r-\Id$. The $W^{1,p_j}$ multiplier product rule therefore gives
\[
 \norm{\nabla\Psi_r\circ\Phi_r-\Id}{\mathfrak X(p_j)}
 +\norm{J_r-1}{L^\infty}
 \le C\eta_{x_0}(r).
\]
Substitution into the formulas for $A_r$ and $B_r$, followed by the same
multiplier product rule, proves \eqref{eq:coefficient-convergence}. The
uniform decay follows from \Cref{cor:uniform-threshold}.
\end{proof}

\begin{remark}[Why a rigid rotation is part of the lemma]
The map $x_3\mapsto x_3-\nabla\psi(x_0')\cdot x'$ is a shear, not an
orthogonal transformation. If it is used to remove the tangent plane, the
limit operator is $\diver(G_{x_0}\nabla u)$ with a constant matrix
$G_{x_0}\ne\Id$. By first rotating with $Q_{x_0}\in SO(3)$, the
Navier--Stokes operator is unchanged and the limiting principal part is
exactly $-\Delta$.
\end{remark}

\section{One-step excess decay and the two-parameter limit}\label{sec:blowup}

Fix $Q>15/4$ and define
\begin{equation}\label{eq:muQ}
 \mu_Q:=\frac{12}{5}-\frac9Q>0.
\end{equation}

\begin{lemma}[One-step boundary decay]\label{lem:one-step}
Let
\begin{equation}\label{eq:theta-range}
 0<\theta<\min\{\mu_f,\mu_Q\}.
\end{equation}
There exist $C_0>0$ and $\tau_0\in(0,1/8)$, independent of $\tau$, such that
for every $\tau\in(0,\tau_0)$ there are
$\delta_*(\tau)>0$ and $\eps_*(\tau)>0$ with the following property.
Suppose that the flattened cylinder comes from a tangent-plane normalized
chart as in \eqref{eq:scaled-chart}, with the structural
$W^{2-1/P_b,P_b}$ bound inherited from
\eqref{eq:tangent-atlas-bound}. If a boundary suitable weak solution in
$Q_1^+$ satisfies
\begin{equation}\label{eq:one-step-smallness}
 \eta(1)\le\delta_*(\tau),
 \qquad
 E_1(u,\pi)+\norm{f}{L^{P_f}(Q_1^+)}^3\le\eps_*(\tau),
\end{equation}
then
\begin{equation}\label{eq:one-step}
 E_\tau(u,\pi)
 \le C_0\tau^\theta
 \left(E_1(u,\pi)+\norm{f}{L^{P_f}(Q_1^+)}^3\right).
\end{equation}
\end{lemma}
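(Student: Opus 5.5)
We argue by contradiction with a two-parameter blow-up, choosing $C_0$ and $\tau_0$ first, from flat linear estimates only. Fix $\theta$ as in \eqref{eq:theta-range}. For the flat Stokes system in $Q_1^+$ with no-slip data on $\{x_3=0\}$ we have the standard boundary decay
\[
 {\fintx}_{Q_\tau^+}|v|^3
 +\tau^3\Bigl({\fintx}_{I_\tau}{\fintx}_{B_\tau^+}|p-(p)_{B_\tau^+}|^{5/3}\Bigr)^{9/5}
 \le C_L\bigl(\tau^{3}+\tau^{\mu_Q}\bigr)\,\mathcal N(v,p),
\]
where $\mathcal N$ collects the $L^3$ and $L^{5/3}$ pressure excess of the limit on $Q_{1/2}^+$. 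The velocity part is Hölder/Lipschitz up to the flat boundary. For the homogeneous pressure we use the higher integrability $L^{5/3}_tL^Q_x$, $Q>15/4$, on $B_\tau^+$, which gives the factor $\tau^{\mu_Q}$ with $\mu_Q=12/5-9/Q$. We then set $C_0:=2C_L$ and pick $\tau_0$ so that $\tau^{\mu_Q}$ and $\tau^{\mu_f}$ dominate the $\tau^3$ terms for $\tau<\tau_0$. Because $\theta<\min\{\mu_f,\mu_Q\}$, this yields $\tau^{\mu_Q}+\tau^{\mu_f}\le\tau^\theta$, with $C_0$ and $\tau_0$ independent of $\tau$.

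Now fix $\tau<\tau_0$ and suppose no $\delta_*(\tau),\eps_*(\tau)$ work. Then there are sequences with $\eta_k(1)\to0$, $\lambda_k^3:=E_1(u_k,\pi_k)+\|f_k\|^3_{L^{P_f}}\to0$, and $E_\tau(u_k,\pi_k)>C_0\tau^\theta\lambda_k^3$. Rescale $v_k=u_k/\lambda_k$ and $p_k=(\pi_k-(\pi_k)_{B_1^+}(t))/\lambda_k$, so that $E_1(v_k,p_k)+\lambda_k^{-3}\|f_k\|^3\le1$. In flattened variables $v_k$ solves
\[
 \partial_t v_k-\diver(A_k\nabla v_k)+B_k\nabla p_k=-\lambda_k(\ldots)\cdot\nabla v_k+f_k/\lambda_k,
 \qquad B_k^\top:\nabla v_k=0 .
\]
By \Cref{lem:coefficients}, $A_k,B_k\to\Id$ in $\mathfrak X(p_j)$. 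The Piola identity \eqref{eq:piola} lets us subtract spatial pressure means freely. We obtain uniform energy bounds from the local energy inequality \eqref{eq:local-energy}; after flattening, its geometric terms are controlled at the $\M^{4/3,3/2}$ level by $\eta_k\to0$. We also need a time-derivative bound in a negative space. Together these give Aubin--Lions compactness: $v_k\to v$ strongly in $L^3(Q_{3/4}^+)$ and $p_k\rightharpoonup p$ weakly. Since both $\lambda_k$ and $\eta_k$ vanish, the limit $(v,p)$ solves the flat Stokes system with zero boundary data. Its norm $\mathcal N(v,p)$ is at most $C$, since the excess of the sequence is bounded by one.

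The main obstacle is the pressure, which converges only weakly in $L^{5/3}$, whereas the contradiction requires strong control of the pressure excess on $Q_\tau^+$. Following \Cref{sec:pressure}, we localize with a cutoff and decompose $p_k=p_k^{e}+p_k^{f}+p_k^{h}$.

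\begin{enumerate}
 \item The error pressure $p_k^e$ solves a flat Stokes problem in $Q_1^+$ (maximal regularity at $(5/3,15/14)$). Its right side collects the nonlinear term $\lambda_k v_k\cdot\nabla v_k$ and the coefficient errors $(A_k-\Id)\nabla v_k$ and $(B_k-\Id)\nabla p_k$. These are estimated in $L^{5/3}_tL^{15/14}_x$ using the $\M^{16/15,15/14}$ and $\mathfrak X(15/14)$ bounds together with $\lambda_k\to0$. The embedding $W^{1,15/14}(\R^3_+)\hookrightarrow L^{5/3}$ then shows $p_k^e\to0$ strongly in $L^{5/3}$.
 \item The forced pressure $p_k^f$, driven by $f_k/\lambda_k$ in $L^{P_f}$, has $\tau$-excess at most $C\tau^{\mu_f}$.
 \item The homogeneous pressure $p_k^h$ solves flat Stokes with smooth interior data. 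Its $L^Q$ estimates pass to the limit and give $C\tau^{\mu_Q}$.
\end{enumerate}

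We then pass to $\limsup_k E_\tau(v_k,p_k)$. The velocity term converges strongly, the $p_k^e$ contribution vanishes, and the $p_k^f$ and $p_k^h$ contributions obey the decay bounds above. This gives $\limsup_k E_\tau(v_k,p_k)\le C_L\tau^\theta<C_0\tau^\theta$, contradicting $E_\tau(v_k,p_k)>C_0\tau^\theta$.

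The delicate verification is item 1. There we must check that the coefficient-error product $(B_k-\Id)\nabla p_k$ is genuinely bounded by $\eta_k$ times a uniform $L^{5/3}W^{1,15/14}$ pressure bound. This is where the critical multiplier level, rather than $P_b>15/4$, has to suffice.
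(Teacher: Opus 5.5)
Your proposal follows the paper's proof essentially step for step: the two-parameter contradiction with $\eta\to0$ and $\lambda\to0$, the $\M^{4/3,3/2}$ energy bound plus Aubin--Lions, the flat forced Stokes limit, and the error/forced/homogeneous split of \Cref{prop:pressure-separation} fed into the rates $\tau^{\mu_f}$, $\tau^{\mu_Q}$ of \Cref{lem:flat-decay}. Two points complete your item~1: the uniform $L^{5/3}_tW^{1,15/14}_x$ pressure bound you flag is Step~1 of that proposition, obtained from the local perturbed Stokes estimate at $(5/3,15/14)$ once $\lambda_k(B_k\nabla v_k)v_k$ is bounded via $\nabla v_k\in L^2_tL^2_x$ and $v_k\in L^{10}_tL^{30/13}_x$; and the localized error problem also carries the divergence datum $(\Id-B_k^\top):\nabla(\zeta v_k)$ for the cutoff $\zeta$, which the Piola flux identity makes admissible.

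One correction: do not claim $\tau^3$ for the limit velocity, because it is in general not Lipschitz up to $\{x_3=0\}$. The force is only in $L^{P_f}$, and with a pressure that is only $L^{5/3}$ in time even the shear flow $v=(u(x_3,t),0,0)$, $\pi=-g(t)x_1$ with $g\in L^{5/3}$ can fail to be $C^{0,\alpha}_{\para}$ for any given $\alpha>4/5$. What holds, and suffices because $\theta<\min\{\mu_f,\mu_Q\}$, is $\fintx_{Q_\tau^+}|v|^3\le C\tau^\nu$ for some $\nu\in(\theta,\min\{\mu_f,12/5\})$. The paper's own statement $\nu<\min\{\mu_f,3\}$ in \Cref{lem:flat-decay} and its claim that $v^h\in C^{0,\alpha}_{\para}$ for every $\alpha<1$ are overstated for the same reason, though this is harmless for the lemma since $\theta<\mu_Q<12/5$.
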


\begin{proof}
The constant $C_0$ is fixed larger than the constant in the flat Stokes decay
estimate of \Cref{lem:flat-decay}. Suppose that, for some fixed
$\tau\in(0,\tau_0)$, no pair $(\delta_*,\eps_*)$ exists. Then for every
$m\in\N$ there is a counterexample $(u_m,\pi_m,\varphi_m,f_m)$ satisfying
\begin{equation}\label{eq:double-smallness}
 \eta(\varphi_m)\le\frac1m,
 \qquad
 \lambda_m^3:=E_1(u_m,\pi_m)+\norm{f_m}{L^{P_f}}^3\le\frac1m,
\end{equation}
but
\begin{equation}\label{eq:failure}
 E_\tau(u_m,\pi_m)>C_0\tau^\theta\lambda_m^3.
\end{equation}
In addition to \eqref{eq:double-smallness}, the charts $\varphi_m$ retain
the common structural $W^{2-1/P_b,P_b}$ bound from the statement.
Normalize
\begin{equation}\label{eq:normalization}
 v_m=\frac{u_m}{\lambda_m},\qquad
 q_m=\frac{\pi_m-(\pi_m)_{B_1^+}(t)}{\lambda_m},\qquad
 g_m=\frac{f_m}{\lambda_m}.
\end{equation}
Then
\begin{equation}\label{eq:normalized-bound}
 {\fintx}_{Q_1^+}|v_m|^3
 +\left({\fintx}_{Q_1^+}|q_m|^{5/3}\right)^{9/5}
 +\norm{g_m}{L^{P_f}}^3=1.
\end{equation}
The transformed equations have the form
\begin{equation}\label{eq:normalized-system}
 J_m\partial_t v_m
 +\lambda_m(B_m\nabla v_m)v_m
 -\diver(A_m\nabla v_m)+\diver(B_mq_m)=J_mg_m,
 \qquad B_m^\top:\nabla v_m=0.
\end{equation}

At the energy multiplier level $(4/3,3/2)$, the transformed local energy
inequality and \eqref{eq:normalized-bound} give
\begin{equation}\label{eq:energy-bound}
 v_m\ \text{bounded in }\
 L_t^\infty L_x^2(Q_{3/4}^+)\cap L_t^2W_x^{1,2}(Q_{3/4}^+).
\end{equation}
For completeness, choose a cutoff $\zeta$ equal to one on $Q_{3/4}^+$ and
use $\zeta^2$ in the transformed local energy inequality. Uniform
ellipticity of $A_m$, positivity of $J_m$, and
\eqref{eq:normalized-bound} control the cutoff, pressure, force, and
$\lambda_m|v_m|^3$ terms. The only term containing a second derivative of
the inverse chart has the form
\[
 \int |v_m|\,|\zeta v_m\,\nabla^2\Psi_m| .
\]
The extension bound \eqref{eq:extension}, the
$\M^{4/3,3/2}$ product estimate, and Young's inequality give, for every
$\varepsilon>0$,
\[
 \int |v_m|\,|\zeta v_m\,\nabla^2\Psi_m|
 \le C_\varepsilon+
 \varepsilon\norm{\zeta\nabla v_m}{L^2(Q_1^+)}^2.
\]
Taking $\varepsilon$ below the ellipticity constant proves
\eqref{eq:energy-bound}. This is the energy-level computation in the proof
of \cite[Lemma~5.1]{Breit2025}; importantly, it uses only the
$\M^{4/3,3/2}$ norm, which is uniformly small here.

Set $w_m:=J_mv_m$. Since $J_m$ is independent of time, the momentum equation
directly controls $\partial_tw_m$. Testing
\eqref{eq:normalized-system} against $W^{1,15}_0(B_{3/4}^+)$ functions and
using the bounds for $A_m,B_m$ together with \eqref{eq:energy-bound} yields
\[
 \partial_t w_m\quad\text{bounded in}\quad
 L_t^{5/4}W_x^{-1,15/14}(Q_{3/4}^+).
\]
Indeed, the viscous term is controlled by $\nabla v_m\in L^2$, while the
pressure and convection terms lie in the indicated negative space because
$q_m\in L^{5/3}$ and the energy interpolation gives
$v_m\in L^{10/3}$. Moreover, $J_m\in W^{1,P_b}\cap L^\infty$ locally and
$P_b>3$, so H\"older and $W^{1,2}\hookrightarrow L^6$ show that $w_m$ is
bounded in $L_t^2W_x^{1,2}$. Aubin--Lions gives strong convergence of $w_m$
in $L^2$. Since $J_m\to1$ uniformly, the same is true for $v_m$.
Interpolation with the uniform $L^{10/3}$ bound then gives, after passing to
a subsequence,
\begin{equation}\label{eq:velocity-strong}
 v_m\to v\quad\text{strongly in }L^3(Q_{2/3}^+).
\end{equation}
At the same time,
\[
 q_m\rightharpoonup q\quad\text{in }L^{5/3}(Q_{2/3}^+),
 \qquad
 g_m\rightharpoonup g\quad\text{in }L^{P_f}(Q_{2/3}^+).
\]
By \eqref{eq:double-smallness} and \Cref{lem:coefficients},
\begin{equation}\label{eq:flat-coefficients}
 J_m\to1,\qquad A_m\to\Id,\qquad B_m\to\Id
\end{equation}
in all coefficient norms needed at both multiplier levels. Since
$\lambda_m\to0$, the convective term disappears. Hence the weak limit solves
the flat forced Stokes system
\begin{equation}\label{eq:flat-limit}
 \partial_t v-\Delta v+\nabla q=g,\qquad
 \diver v=0\quad\text{in }Q_{2/3}^+,\qquad
 v=0\quad\text{on }\{x_3=0\}.
\end{equation}

The pressure cannot be handled by weak convergence alone. Apply
\Cref{prop:pressure-separation} to write, in $Q_{1/2}^+$,
\begin{equation}\label{eq:pressure-split}
 q_m=q_m^{\rm c}+q_m^f+q_m^h+c_m(t),
\end{equation}
where $c_m(t)$ is irrelevant to the excess. The nonlinear and
coefficient-error pressures are combined in $q_m^{\rm c}$ and converge
strongly to zero in $L^{5/3}$, while $q_m^f$ and $q_m^h$ satisfy the uniform gradient bounds at
exponents $P_f$ and $(5/3,Q)$, respectively. Together with
\eqref{eq:velocity-strong} and \Cref{lem:flat-decay}, this gives, after
choosing $\theta<\nu<\min\{\mu_f,3\}$ (which is possible because
$\theta<\mu_Q<12/5$),
\begin{equation}\label{eq:limsup-decay}
 \limsup_{m\to\infty}E_\tau(v_m,q_m)
 \le C\big(\tau^\nu+\tau^{\mu_f}+\tau^{\mu_Q}\big)
 \le C\tau^\theta.
\end{equation}
Choosing $C_0>2C$ and then $\tau_0$ small enough contradicts
\eqref{eq:failure}. This proves the lemma.
\end{proof}

\begin{remark}[Quantifier order]
The thresholds $\delta_*$ and $\eps_*$ may depend on the fixed contraction
factor $\tau$, but $C_0$ must not. In the iteration one first chooses
$\theta$, then an exponent $\beta<\theta$, then $\tau$ so that
$C_0\tau^{\theta-\beta}<1$, and only afterwards fixes the two smallness
thresholds. Writing $C_0=C_0(\tau)$ would not provide a contraction.
\end{remark}

\section{Critical pressure separation}\label{sec:pressure}

The pressure is separated only after the localized equation has been rewritten
as a flat Stokes system. This avoids any request for
$L_t^{5/3}W_x^{1,Q}$ regularity of a Stokes system with rough coefficients.
The coefficient errors are estimated at the critical pair and then disappear
by
\begin{equation}\label{eq:critical-embedding}
 \dot W^{1,15/14}(\Hh)\hookrightarrow L^{5/3}(\Hh).
\end{equation}

We record the precise flat theorem used in the decomposition. It is the
zero-initial-data case of the nonzero-divergence half-space theorem
\cite{FarwigSohr1994,Solonnikov2003}; see also the flat estimate invoked in
\cite[Lemma 4.2]{Breit2025}.

\begin{lemma}[Flat Stokes system with nonzero divergence]
\label{lem:flat-nonzero-divergence}
Let $1<a,p<\infty$ and let $\mathcal C=\mathcal I\times\Hh$. Suppose that
$F\in L^a(\mathcal I;L^p(\Hh))$ and
\[
 D\in L^a(\mathcal I;W^{1,p}(\Hh)\cap L^p_\perp(\Hh)),\qquad
 \partial_tD\in L^a(\mathcal I;\dot W^{-1,p}(\Hh)),
\]
where $D$ is compactly supported in space, has zero spatial mean for almost
every time, and has zero trace at the lower endpoint $t_-$ of $\mathcal I$.
For compactly supported data there is a solution, modulo spatially constant
pressures, of
\[
 \partial_tU-\Delta U+\nabla P=F,\qquad
 \diver U=D,
\]
with zero initial value and zero trace on $\partial\Hh$, such that
$\partial_tU,\nabla^2U,\nabla P\in L^a(\mathcal I;L^p(\Hh))$ and
$U\in L^a(\mathcal I;L^p(K))$ for every compact
$K\Subset\overline\Hh$. It satisfies
\begin{align}\label{eq:flat-maxreg}
 &\norm{\partial_tU}{L^aL^p}
 +\norm{\nabla^2U}{L^aL^p}
 +\norm{\nabla P}{L^aL^p}\nonumber\\
 &\qquad\le C_{a,p}\left(
 \norm{F}{L^aL^p}+\norm{\nabla D}{L^aL^p}
 +\norm{\partial_tD}{L^a\dot W^{-1,p}}\right).
\end{align}
The velocity is unique in this local-$L^p$ homogeneous maximal-regularity
class, and the pressure is unique modulo functions of time. For $D=0$, the
solution operators are also consistent on intersections of admissible forcing
spaces: if $F$ belongs to the classes for both $(a,p)$ and
$(\widetilde a,\widetilde p)$, the two velocities coincide and the two
pressure gradients coincide.
\end{lemma}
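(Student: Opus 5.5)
The plan is to reduce to the solenoidal case and then apply the classical half-space maximal-regularity theorem. First we construct a divergence corrector. Since $D(t)$ has compact support, zero spatial mean and lies in $W^{1,p}$, we apply the Bogovski\u{\i} operator $\mathcal B$ on a fixed bounded Lipschitz subdomain $G\subset\Hh$ containing the supports (\cite{Bogovskii1979}), and set $V:=\mathcal B D$. Then $\diver V=D$, $V$ has compact support in $G$ and vanishes on $\partial\Hh$, and $\norm{\nabla^2V}{L^p}\le C\norm{\nabla D}{L^p}$. For the time derivative we use the negative-order extension of \cite{GeissertHeckHieber2006}: $\mathcal B$ extends boundedly from $\dot W^{-1,p}$ data with vanishing mean into $L^p(G)$. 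Because $\partial_t$ commutes with $\mathcal B$, this gives $\norm{\partial_tV}{L^aL^p}\le C\norm{\partial_tD}{L^a\dot W^{-1,p}}$. The zero trace of $D$ at $t_-$ yields $V(t_-)=0$.

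Next we solve the remaining solenoidal problem. Set $U=V+W$, where $W$ solves
\[
 \partial_tW-\Delta W+\nabla P=F-\partial_tV+\Delta V,\qquad \diver W=0 ,
\]
with zero initial value and zero boundary trace. The right-hand side lies in $L^aL^p$ and is bounded by the right-hand side of \eqref{eq:flat-maxreg}. We apply the homogeneous half-space maximal-regularity estimate \cite{Solonnikov1977,Ukai1987,GigaSohr1991,DeschHieberPruess2001}. Concretely, we solve $\partial_tW+\mathbb A_pW=\mathbb P_p(\text{RHS})$ with the Stokes operator $\mathbb A_p$ and the Helmholtz projection $\mathbb P_p$ on $L^p(\Hh)$, and recover $\nabla P=(\Id-\mathbb P_p)(\text{RHS}+\Delta W)$ from the Helmholtz decomposition. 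Adding the bounds for $V$ and $W$ gives \eqref{eq:flat-maxreg}. The local bound $U\in L^aL^p(K)$ follows by integrating $\partial_tU$ from zero initial data over a bounded time interval, or from Poincaré-type control using $\nabla^2U$ and the boundary trace on compacts.

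\textbf{Uniqueness and consistency.} For uniqueness, suppose the data vanish. The difference $\widetilde U$ then solves the homogeneous Stokes system in the local-$L^p$ homogeneous maximal-regularity class with zero data. Half-space Liouville-type uniqueness, proved by testing against solutions of the dual problem in $L^{a'}L^{p'}$ via the Ukai representation, gives $\widetilde U=0$. The equation then forces $\nabla\widetilde P=0$, so the pressure is a function of time. For consistency when $D=0$, the Stokes semigroups and Helmholtz projections on $L^p$ and $L^{\widetilde p}$ agree on $L^p\cap L^{\widetilde p}$, and hence so do the variation-of-constants solutions. Alternatively, approximate $F$ by $C_c^\infty$ data, for which both solutions coincide by uniqueness, and pass to the limit in both norms.

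The main obstacle is the corrector step, specifically the estimate of $\partial_tV$ in $L^p$ from $\partial_tD\in\dot W^{-1,p}$. This needs the negative-order Bogovski\u{\i} bound. It also needs the compatibility that $\partial_tD$ has zero mean, which follows from $D(t)$ having zero mean, so that it defines a functional on $\dot W^{1,p'}$ modulo constants. A secondary delicate point is working in homogeneous spaces on the unbounded half-space, where $U$ itself is only locally $L^p$. Uniqueness has to be stated in exactly that class, and we would rely on the cited theorem \cite{FarwigSohr1994,Solonnikov2003} for this rather than reprove it.
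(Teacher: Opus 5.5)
You take a genuinely different route from the paper, and it has a gap at the flat boundary. The paper simply quotes the nonzero-divergence half-space theorem of Farwig--Sohr and Solonnikov, and gets consistency from the exponent-independence of the half-space kernel plus density. Your route is more explicit: a divergence corrector, then maximal regularity of the Stokes operator on $L^p_\sigma(\Hh)$, with consistency read off from the Stokes semigroups. The problem is the corrector step.

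Bogovski\u{\i}'s bound $\norm{\nabla^2\mathcal BD}{L^p(G)}\le C\norm{\nabla D}{L^p(G)}$, and the Geissert--Heck--Hieber extension, concern data in $W^{1,p}_0(G)$. There $\mathcal B$ maps $W^{1,p}_0(G)\cap L^p_0(G)$ into $W^{2,p}_0(G)$. This theory gives no $W^{2,p}$ bound for data with nonzero trace on $\partial G$; indeed, for $V\in W^{2,p}_0(G)$ the divergence has zero trace. The lemma, however, only assumes $D(t)\in W^{1,p}(\Hh)$ with compact support in $\overline\Hh$, and nothing forces $D$ to vanish on $\{x_3=0\}$. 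If $D$ has nonzero trace there, every $G\subset\Hh$ containing $\operatorname{supp}D$ shares a piece of boundary with $\partial\Hh$ on which $D\neq0$. Then $D(t)\notin W^{1,p}_0(G)$, and you only get $V\in W^{1,p}_0(G)$. Your bound on $\nabla^2V$, and with it the $L^aL^p$ bound on the forcing $F-\partial_tV+\Delta V$ of your solenoidal problem, is unjustified. Your construction is fine only when $\operatorname{supp}D\Subset\Hh$ or $D$ has zero trace on $\partial\Hh$.

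This excluded case is exactly the one the paper needs. In Proposition~\ref{prop:pressure-separation} the lemma is applied to $H_m^{\rm c}=(\Id-B_m^\top):\nabla U_m$. Its trace on $\{x_3=0\}$ is $\zeta\diver v_m=\zeta(\Id-B_m^\top):\nabla v_m$, which need not vanish, because only $B_m^\top:\nabla v_m$ does. Your argument does cover $H_m^{\rm a}$, which vanishes on the flat boundary together with $v_m$.

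To close the gap you need a corrector that tolerates nonzero boundary values of the divergence datum. One option:
\begin{itemize}
\item Solve a stationary Stokes problem with $\diver V=D$ in a smooth bounded domain whose boundary contains a flat piece of $\partial\Hh$ around $\operatorname{supp}D\cap\partial\Hh$.
\item Cattabriga's estimate and Poincar\'e then control $\nabla^2V$.
\item Duality with the adjoint Stokes problem gives $\norm{V}{L^p}\le C\norm{D}{(W^{1,p'})'}$, which handles $\partial_tV$.
\item Cut this solution off away from the curved part of the boundary. The resulting mean-zero error vanishes on $\partial\Hh$, so Bogovski\u{\i} can remove it.
\end{itemize}
Alternatively, invoke the half-space nonzero-divergence theory the paper cites. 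The remaining steps of your proposal are sound: solenoidal maximal regularity, recovery of $\nabla P$ via the Helmholtz projection, uniqueness, and consistency.
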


\begin{proof}
The existence, estimate, and uniqueness are the zero-initial-data
nonzero-divergence estimate of
\cite{FarwigSohr1994,Solonnikov2003}, in the compactly supported half-space
form used in \cite[Lemma~4.2]{Breit2025}. For the last assertion, first take
a smooth compactly supported force in the intersection. The half-space Stokes
kernel representation is independent of the exponents, so the two
realizations agree. Approximation in both forcing norms and
\eqref{eq:flat-maxreg} then prove consistency for general intersection data.
\end{proof}

\begin{remark}[Compatibility in the half-space]\label{rem:halfspace-compatibility}
The hypotheses used below are deliberately stronger than the minimum needed
in some homogeneous half-space formulations: every divergence datum is the
divergence of a compactly supported flux with zero boundary trace. Thus its
mean vanishes, its initial trace is zero, and it can be handled either by the
half-space theorem or by the corresponding localized half-ball estimate. No
flux at spatial infinity is used in the proof.
\end{remark}

\begin{proposition}[Critical curved--flat pressure separation]
\label{prop:pressure-separation}
Let $(v_m,q_m)$ solve \eqref{eq:normalized-system} and suppose that
\eqref{eq:normalized-bound}, \eqref{eq:energy-bound}, and
\eqref{eq:flat-coefficients} hold. Let $g_m$ be bounded in
$L^{P_f}(Q_{3/4}^+)$, with $P_f>5/2$. For every finite $Q>15/4$, after
subtracting time-dependent spatial means, the pressure admits in $Q_{1/2}^+$
the decomposition \eqref{eq:pressure-split} such that
\begin{align}
 \norm{q_m^{\rm c}}{L^{5/3}(Q_{1/2}^+)}&\longrightarrow0,
 \label{eq:critical-small}\\
 \norm{\nabla q_m^f}{L^{P_f}(Q_{1/2}^+)}&\le C,
 \label{eq:forced-pressure}\\
 \norm{\nabla q_m^h}{L_t^{5/3}L_x^Q(Q_{1/2}^+)}&\le C_Q.
 \label{eq:hom-pressure}
\end{align}
Here $q_m^{\rm c}$ contains both the vanishing nonlinear pressure and all
coefficient-error pressures. The constants are uniform for coefficient
triples in a fixed sufficiently small critical multiplier ball.
\end{proposition}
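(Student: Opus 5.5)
We will localize and rewrite the curved system \eqref{eq:normalized-system} as a flat Stokes system with right-hand side, then split the forcing by linearity, using \Cref{lem:flat-nonzero-divergence} for each piece.

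\textbf{Localization.} Fix a cutoff $\zeta\in C_c^\infty$ equal to one on $Q_{1/2}^+$, supported in $Q_{2/3}^+$, vanishing near the initial time. Put $V_m=\zeta v_m$ and write \eqref{eq:normalized-system} as
\[
 \partial_tV_m-\Delta V_m+\nabla(\zeta q_m)
 =\zeta g_m+\mathcal N_m+\mathcal E_m+\mathcal L_m ,\qquad
 \diver V_m=D_m .
\]
Here $\mathcal N_m=-\lambda_m\zeta(B_m\nabla v_m)v_m$ is the convection. The coefficient error is
\[
 \mathcal E_m=\diver\bigl((A_m-\Id)\nabla(\zeta v_m)\bigr)-\diver\bigl((B_m-\Id)\zeta q_m\bigr)+(1-J_m)\partial_t(\zeta v_m)+\dots
\]
The term $\mathcal L_m$ collects the lower-order cutoff terms ($v_m\partial_t\zeta$, $\nabla\zeta\cdot\nabla v_m$, $q_m\nabla\zeta$, etc.). For the divergence we use $B_m^\top:\nabla v_m=0$ and the Piola identity \eqref{eq:piola} to write
\[
 D_m=\nabla\zeta\cdot v_m+\diver\bigl((\Id-B_m)\zeta v_m\bigr)-(\Id-B_m)\nabla\zeta\cdot v_m .
\]
This is the divergence of a compactly supported flux, so \Cref{rem:halfspace-compatibility} applies.

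\textbf{Three flat problems.} We solve three flat problems by \Cref{lem:flat-nonzero-divergence}.

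\emph{Critical part.} The first problem has data $\mathcal N_m+\mathcal E_m$ and the $\Id-B_m$ parts of $D_m$, at the critical pair $(a,p)=(5/3,15/14)$. By Lemma~\ref{lem:coefficients} at level $(16/15,15/14)$, the $\mathfrak X(15/14)$ norms of $A_m-\Id$ and $B_m-\Id$ are $O(\eta_m)\to0$. Each error term is then bounded by $C\eta_m$ times the norm in $L^{5/3}W^{1,15/14}$, $L^{5/3}L^{15/14}$ or $L^{5/3}W^{2,15/14}$ of $v_m$, $q_m$ or $\partial_tv_m$ that Breit's local estimate (\cite[Lemma~4.2]{Breit2025}, valid in the small critical multiplier ball) supplies uniformly in $m$.

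The convection satisfies $\lambda_m\|v_m\nabla v_m\|_{L^{5/3}L^{15/14}}\le C\lambda_m\to0$ by \eqref{eq:energy-bound}, using $v_m\in L^{10/3}$ and $\nabla v_m\in L^2$. Hence $\|\nabla P^{\rm c}_m\|_{L^{5/3}L^{15/14}}\to0$. After mean subtraction, \eqref{eq:critical-embedding} gives \eqref{eq:critical-small} for $q^{\rm c}_m=P^{\rm c}_m-(P^{\rm c}_m)_{B^+}$.

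\emph{Forced part.} The second problem has force $\zeta g_m$ with $(a,p)=(P_f,P_f)$. The bound \eqref{eq:flat-maxreg} gives \eqref{eq:forced-pressure} directly.

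\emph{Remaining part.} The third problem carries $\mathcal L_m$ and $\nabla\zeta\cdot v_m$. Its data are only bounded in $L^{5/3}$-type spaces, so it does not directly yield $L^Q_x$ gradients. Instead we define $q_m^h$ as the remainder $\zeta q_m-P^{\rm c}_m-P^f_m$ restricted to $Q_{1/2}^+$. On $Q_{1/2}^+$, where $\zeta=1$, the pair $(V_m-U^{\rm c}_m-U^f_m,\,q_m^h)$ solves the \emph{homogeneous} flat Stokes system with zero boundary trace.

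\textbf{Main obstacle.} The hardest step is \eqref{eq:hom-pressure}: interior-to-boundary higher integrability of the pressure for a flat homogeneous Stokes solution whose velocity is only known in $L^{5/3}W^{1,15/14}$-type classes and whose pressure is only known in $L^{5/3}$. The plan is to use the local half-space estimate of Seregin type. For a flat solution of the homogeneous system on $Q_{3/4}^+$, with $\nabla^2U,\nabla P\in L^{5/3}L^{15/14}$, one has for every $Q<\infty$
\[
 \|\nabla P\|_{L^{5/3}L^Q(Q_{1/2}^+)}\le C_Q\bigl(\|U\|_{L^{5/3}W^{1,15/14}}+\|P\|_{L^{5/3}L^{15/14}}\bigr)\quad\text{on }Q_{3/4}^+ .
\]
This follows by bootstrapping spatial integrability through repeated localization. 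Each new cutoff error is lower order and gains integrability by Sobolev embedding in $x$. The time exponent $5/3$ stays fixed because the estimate is applied with fixed $a=5/3$ at each step.

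The right-hand side is uniformly bounded by \eqref{eq:normalized-bound}, the critical estimates above, and $\|P^f_m\|\le C$. Finally, \Cref{lem:flat-nonzero-divergence} ensures that the solution operators for different exponents coincide, so $q^{\rm c}_m$, $q^f_m$ and $q^h_m$ add up to $q_m$ modulo the time function $c_m(t)$. The constants depend only on the critical multiplier ball, which gives the stated uniformity.
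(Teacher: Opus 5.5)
Your architecture matches the paper's: localize, rewrite with the flat Stokes operator, then split into a vanishing critical problem at $(5/3,15/14)$, a forced problem at $P_f$, and a homogeneous part bootstrapped at fixed time exponent $5/3$. Defining $q_m^h$ by subtraction, rather than solving a third flat problem and invoking uniqueness, is a legitimate simplification.

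\textbf{The gap: the critical divergence datum.} You assign it all ``$\Id-B_m$ parts'' of $D_m$, which gives
\[
 D_m^{\rm c}=\diver\bigl((\Id-B_m)\zeta v_m\bigr)-(\Id-B_m)\nabla\zeta\cdot v_m=\zeta\diver v_m .
\]
Its spatial mean is $\int_{\Hh}\zeta\diver v_m\dd x=-\int_{\Hh}\nabla\zeta\cdot v_m\dd x$. This is generically nonzero and time dependent, since only $B_m^\top:\nabla v_m$ vanishes, not $\diver v_m$.

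Hence $D_m^{\rm c}$ violates the zero-mean hypothesis of \Cref{lem:flat-nonzero-divergence}. Moreover, $\partial_tD_m^{\rm c}$ contains the non-flux term $-\partial_t\bigl((\Id-B_m)\nabla\zeta\cdot v_m\bigr)$, which has nonzero mean. At $p=15/14<3/2$ this is not a formality. The flux of $\partial_tU$ through large half-spheres equals the time derivative of $\int_{\Hh}D\dd x$, and that is incompatible with $\partial_tU\in L^{15/14}$ near infinity. Relatedly, you never estimate the term $\norm{\partial_tD}{L^a\dot W^{-1,p}}$ in \eqref{eq:flat-maxreg}. That is exactly where the Piola flux structure has to supply the small factor for $\nabla P_m^{\rm c}$.

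\textbf{The repair is the paper's grouping.} Give the critical problem only
\[
 H_m^{\rm c}=(\Id-B_m^\top):\nabla(\zeta v_m)=\diver\bigl((\Id-B_m)\zeta v_m\bigr).
\]
This datum has zero mean. Its time derivative $\partial_tH_m^{\rm c}=\diver((\Id-B_m)\partial_t(\zeta v_m))$ is $O(\eta_m)$ in $L^{5/3}\dot W^{-1,15/14}$. Move $-(\Id-B_m)\nabla\zeta\cdot v_m$ into your remainder, whose divergence then becomes $B_m^\top:(v_m\otimes\nabla\zeta)$. Because you define the remainder by subtraction, it needs no admissibility.

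\textbf{Two smaller slips.}
\begin{enumerate}
 \item With $\zeta\equiv1$ only on $Q_{1/2}^+$, your remainder is homogeneous only on $Q_{1/2}^+$. The bootstrap you state (homogeneous on $Q_{3/4}^+$, conclusion on $Q_{1/2}^+$) therefore does not apply. Take $\zeta\equiv1$ on a larger cylinder, e.g.\ $Q_{5/8}^+$ as in the paper.
 \item $v_m\in L^{10/3}$ and $\nabla v_m\in L^2$ only place the convection in $L^{5/4}_{t,x}$, not in $L^{5/3}_tL^{15/14}_x$. Use instead the energy interpolation $v_m\in L^{10}_tL^{30/13}_x$, for which $\frac12+\frac1{10}=\frac35$ and $\frac12+\frac{13}{30}=\frac{14}{15}$.
\end{enumerate}
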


\begin{proof}
Set
\[
 a_0=\frac53,\qquad p_0=\frac{15}{14}.
\]
All mixed norms in the proof are over fixed cylinders, so changing their
radii only changes harmless constants.

\smallskip
\noindent\emph{Step 1: uniform critical regularity.}
Interpolation between the two energy spaces in
\eqref{eq:energy-bound} gives
\begin{equation}\label{eq:energy-interpolation}
 \norm{v_m}{L_t^{10}L_x^{30/13}(Q_{3/4}^+)}\le C.
\end{equation}
Consequently,
\begin{equation}\label{eq:nonlinear-forcing}
 \norm{(B_m\nabla v_m)v_m}
 {L_t^{a_0}L_x^{p_0}(Q_{3/4}^+)}
 \le C
 \norm{\nabla v_m}{L_t^2L_x^2}
 \norm{v_m}{L_t^{10}L_x^{30/13}}
 \le C.
\end{equation}
Indeed, $1/a_0=1/2+1/10$ and
$1/p_0=1/2+13/30$. Since $P_f>5/2$, the force is bounded in
$L_t^{a_0}L_x^{p_0}$ on the fixed cylinder. The local perturbed Stokes
estimate \cite[Lemma 4.3, with $p=q=p_0$]{Breit2025}, applied to
\eqref{eq:normalized-system}, therefore yields
\begin{equation}\label{eq:critical-local-maxreg}
\begin{aligned}
 &\norm{\partial_t v_m}{L_t^{a_0}L_x^{p_0}(Q_{2/3}^+)}
 +\norm{\nabla^2v_m}{L_t^{a_0}L_x^{p_0}(Q_{2/3}^+)}
 +\norm{\nabla q_m}{L_t^{a_0}L_x^{p_0}(Q_{2/3}^+)}
 \le C.
\end{aligned}
\end{equation}
The right-hand side is uniform because the lower-order norms of
$\nabla v_m$ and $q_m-(q_m)_{B_{3/4}^+}(t)$ are controlled by
\eqref{eq:energy-bound} and \eqref{eq:normalized-bound}. Only the
$\M^{16/15,15/14}$ smallness is used in
\eqref{eq:critical-local-maxreg}.

\smallskip
\noindent\emph{Step 2: localization in the flat half-space.}
Choose a space--time cutoff $\zeta$ which is one on $Q_{5/8}^+$, is
spatially supported in $B_{2/3}$, and vanishes near the initial time of a
fixed interval $\mathcal I\supset I_{2/3}$. Subtract
$(q_m)_{B_{2/3}^+}(t)$ and set
\[
 U_m=\zeta v_m,\qquad
 P_m=\zeta\big(q_m-(q_m)_{B_{2/3}^+}(t)\big).
\]
Extend these functions by zero in the spatial directions to
$\mathcal C=\mathcal I\times\Hh$. Time-dependent pressure constants do not
alter the transformed equation because $\diver B_m=0$ by
\eqref{eq:piola}. The time cutoff is chosen to vanish on a neighborhood of
the lower endpoint of $\mathcal I$, so all divergence data introduced below
have zero initial trace.

Let $\mathcal L_m$ denote the momentum operator on the left of
\eqref{eq:normalized-system}, without convection, and define the cutoff
commutator
\[
 K_m:=\mathcal L_m(U_m,P_m)
 -\zeta\mathcal L_m(v_m,q_m).
\]
Also put
\begin{equation}\label{eq:cutoff-divergence}
 H_m^{\rm a}:=B_m^\top:\nabla U_m
 =B_m^\top:(v_m\otimes\nabla\zeta).
\end{equation}
Both $K_m$ and $H_m^{\rm a}$ are supported where a derivative of $\zeta$
is nonzero. The product estimates used in the proof of
\cite[Lemma 4.3]{Breit2025}, together with
\eqref{eq:critical-local-maxreg}, give
\begin{equation}\label{eq:annular-data}
 \norm{K_m}{L_t^{a_0}L_x^{p_0}(\mathcal C)}\le C.
\end{equation}

Rewrite the localized equation with the flat Stokes operator:
\begin{equation}\label{eq:localized-flat}
\begin{aligned}
 \partial_tU_m-\Delta U_m+\nabla P_m
 &=\zeta J_mg_m-\lambda_m\zeta(B_m\nabla v_m)v_m
   +K_m+R_m^{\rm c},\\
 \diver U_m&=H_m^{\rm a}+H_m^{\rm c},
\end{aligned}
\end{equation}
where
\begin{equation}\label{eq:coefficient-errors}
\begin{aligned}
 R_m^{\rm c}
 &:=(1-J_m)\partial_tU_m
   +\diver((A_m-\Id)\nabla U_m)
   +\diver((\Id-B_m)P_m),\\
 H_m^{\rm c}&:=(\Id-B_m^\top):\nabla U_m.
\end{aligned}
\end{equation}
Let $\kappa_m$ be the sum of the coefficient norms in
\eqref{eq:flat-coefficients} at the critical level. Then
$\kappa_m\to0$. The multiplier estimates in the proof of
\cite[Lemma 4.2]{Breit2025} and \eqref{eq:critical-local-maxreg} imply
\begin{equation}\label{eq:coefficient-errors-small}
 \norm{R_m^{\rm c}}{L_t^{a_0}L_x^{p_0}}\le C\kappa_m.
\end{equation}
More explicitly, the product estimates give
\[
 \norm{R_m^{\rm c}}{L_t^{a_0}L_x^{p_0}}
 \le C\kappa_m\left(
 \norm{\partial_tU_m}{L_t^{a_0}L_x^{p_0}}
 +\norm{\nabla U_m}{L_t^{a_0}W_x^{1,p_0}}
 +\norm{P_m}{L_t^{a_0}W_x^{1,p_0}}
 \right).
\]
The last norm is controlled because the pressure was first normalized by its
mean on $B_{2/3}^+$ and then multiplied by a compactly supported cutoff.

The two divergence data in \eqref{eq:localized-flat} are separately
admissible for \Cref{lem:flat-nonzero-divergence}. Indeed,
\eqref{eq:piola} and the compact support of $U_m$ give the exact flux
identities
\begin{equation}\label{eq:divergence-fluxes}
 H_m^{\rm a}=\diver(B_mU_m),\qquad
 H_m^{\rm c}=\diver((\Id-B_m)U_m).
\end{equation}
Here the first identity uses
$B_m^\top:\nabla v_m=0$, while the second is just
$\diver U_m-B_m^\top:\nabla U_m$. Hence, for almost every $t$,
\begin{equation}\label{eq:separate-compatibility}
 \int_{\Hh}H_m^{\rm a}(t,x)\dd x=0,
 \qquad
 \int_{\Hh}H_m^{\rm c}(t,x)\dd x=0.
\end{equation}
These are exactly the spatial compatibility conditions in
\Cref{lem:flat-nonzero-divergence}. Since the coefficients are independent
of time,
\begin{equation}\label{eq:divergence-time-derivatives}
 \partial_tH_m^{\rm a}=\diver(B_m\partial_tU_m),\qquad
 \partial_tH_m^{\rm c}=\diver((\Id-B_m)\partial_tU_m)
 \quad\text{in }\mathcal D'(\Hh).
\end{equation}
The $L^p\to\dot W^{-1,p}$ boundedness of divergence, the critical local
estimate, and the multiplier product rule therefore yield directly
\begin{equation}\label{eq:divergence-data-audit}
\begin{aligned}
 &\norm{\nabla H_m^{\rm a}}{L_t^{a_0}L_x^{p_0}}
 +\norm{\partial_tH_m^{\rm a}}
 {L_t^{a_0}\dot W_x^{-1,p_0}}\le C,\\
 &\norm{\nabla H_m^{\rm c}}{L_t^{a_0}L_x^{p_0}}
 +\norm{\partial_tH_m^{\rm c}}
 {L_t^{a_0}\dot W_x^{-1,p_0}}\le C\kappa_m.
\end{aligned}
\end{equation}
Here the first spatial derivative estimate also uses that
$H_m^{\rm a}=B_m^\top:(v_m\otimes\nabla\zeta)$, whereas the second follows
by applying the $W^{1,p_0}$ multiplier norm of $\Id-B_m$ to $\nabla U_m$.
Because $\zeta$ vanishes near the lower endpoint of $\mathcal I$, both data
have zero initial trace. Thus every compatibility and regularity hypothesis
of the flat theorem is verified separately, without an annular mean
correction.

\smallskip
\noindent\emph{Step 3: the three flat Stokes problems.}
On $\mathcal C$, with zero initial value and zero trace on $\partial\Hh$,
solve all three flat Stokes problems first at the critical pair
$(a_0,p_0)$, corresponding to the following momentum and divergence data:
\begin{align*}
 (F_m^f,D_m^f)
   &=(\zeta J_mg_m,0),\\
 (F_m^{\rm c},D_m^{\rm c})
   &=\big(-\lambda_m\zeta(B_m\nabla v_m)v_m+R_m^{\rm c},
          H_m^{\rm c}\big),\\
 (F_m^h,D_m^h)&=(K_m,H_m^{\rm a}).
\end{align*}
Denote the solutions by $(U_m^i,P_m^i)$,
$i\in\{f,{\rm c},h\}$. By
\Cref{lem:flat-nonzero-divergence},
\eqref{eq:coefficient-errors-small}, and
\eqref{eq:divergence-data-audit}, all three problems belong to the same
local-$L^{p_0}$ homogeneous uniqueness class. The localized pair
$(U_m,P_m)$ belongs to that class as well. Linearity and uniqueness therefore
give
\begin{equation}\label{eq:flat-sum}
 U_m=U_m^{\rm c}+U_m^f+U_m^h,\qquad
 P_m=P_m^{\rm c}+P_m^f+P_m^h+c_m(t).
\end{equation}

The forced data belong simultaneously to the critical class and to
$L^{P_f}(\mathcal C)$ because they have fixed compact support. By the
cross-exponent consistency in \Cref{lem:flat-nonzero-divergence}, the critical
realization $(U_m^f,P_m^f)$ is the same realization as the one obtained with
$(a,p)=(P_f,P_f)$. Consequently maximal regularity at $P_f$ yields
\begin{equation}\label{eq:forced-flat-estimate}
 \norm{\nabla P_m^f}{L^{P_f}(\mathcal C)}
 \le C\norm{\zeta J_mg_m}{L^{P_f}(\mathcal C)}
 \le C.
\end{equation}
For the critical part, \eqref{eq:nonlinear-forcing} and
\eqref{eq:coefficient-errors-small} give
\begin{equation}\label{eq:critical-flat-estimate}
 \norm{\nabla P_m^{\rm c}}{L_t^{a_0}L_x^{p_0}(\mathcal C)}
 \le C(\lambda_m+\kappa_m)\longrightarrow0.
\end{equation}
Choosing at each time the homogeneous Sobolev representative of
$P_m^{\rm c}$, the critical embedding \eqref{eq:critical-embedding} implies
\begin{equation}\label{eq:critical-pressure-strong}
 \norm{P_m^{\rm c}}{L^{5/3}(\mathcal C)}
 \le C\norm{\nabla P_m^{\rm c}}
 {L_t^{5/3}L_x^{15/14}(\mathcal C)}
 \longrightarrow0.
\end{equation}

Finally, \eqref{eq:annular-data} and \eqref{eq:divergence-data-audit}
give a uniform critical maximal-regularity bound for $(U_m^h,P_m^h)$. The data
$(K_m,H_m^{\rm a})$ vanish in $Q_{5/8}^+$, so this pair solves the
homogeneous flat Stokes system there. Repeated application of the local flat
estimate \cite[Lemma 4.3 with $\varphi=0$]{Breit2025} on nested
half-cylinders, normalizing the pressure by its spatial mean at each stage,
yields, for every finite $Q$,
\begin{equation}\label{eq:homogeneous-bootstrap}
 \norm{\nabla P_m^h}{L_t^{5/3}L_x^Q(Q_{1/2}^+)}
 \le C_Q.
\end{equation}
Since $\zeta=1$ on $Q_{1/2}^+$, the pressure identity in
\eqref{eq:flat-sum}, restricted to that cylinder, is precisely
\eqref{eq:pressure-split}. Estimates
\eqref{eq:critical-pressure-strong},
\eqref{eq:forced-flat-estimate}, and
\eqref{eq:homogeneous-bootstrap} prove
\eqref{eq:critical-small}--\eqref{eq:hom-pressure}.
\end{proof}

\begin{remark}[Why no annular Bogovskii correction is needed]
The localized flat Stokes theorem allows nonzero divergence data with
$\nabla D\in L^{a_0}L^{p_0}$ and
$\partial_tD\in L^{a_0}\dot W^{-1,p_0}$, zero spatial mean, and zero initial
matching. The Piola identity gives the exact flux representations
\eqref{eq:divergence-fluxes}; these imply both the time-derivative bounds in
\eqref{eq:divergence-data-audit} and the separate zero means in
\eqref{eq:separate-compatibility}. A Bogovskii velocity could be
introduced, but it would only repackage the same estimates and is not needed
for the decomposition.
\end{remark}

\section{Flat Stokes decay exponents}\label{sec:flat}

The two exponents in \Cref{lem:one-step} follow from elementary scaling once
the pressure separation is available.

\begin{lemma}[Flat velocity and pressure decay]\label{lem:flat-decay}
Let $(v,q)$ solve
\[
 \partial_t v-\Delta v+\nabla q=g,\qquad \diver v=0
 \quad\text{in }Q_{1/2}^+,\qquad v=0\quad\text{on }\{x_3=0\},
\]
with $g\in L^{P_f}$ and with the energy and $L^{5/3}$ pressure norms bounded
by $M$, and assume also $\norm{g}{L^{P_f}}\le M$. Then, for every
\begin{equation}\label{eq:velocity-decay-exponent}
 0<\nu<\min\left\{6-\frac{15}{P_f},3\right\},
\end{equation}
there is $C=C(M,P_f,\nu)$ such that
\begin{equation}\label{eq:flat-velocity-decay}
 {\fintx}_{Q_\tau^+}|v|^3\le C\tau^\nu,
 \qquad 0<\tau<\frac14.
\end{equation}

Independently, let $q^f,q^h$ satisfy
\[
 \nabla q^f\in L^{P_f}(Q_{1/2}^+),\qquad
 \nabla q^h\in L_t^{5/3}L_x^Q(Q_{1/2}^+),
\]
where $P_f>5/2$ and $Q>15/4$. Then for $0<\tau<1/4$,
\begin{align}
 &\tau^3\left(
 {\fintx}_{I_\tau}{\fintx}_{B_\tau^+}
 |q^f-(q^f)_{B_\tau^+}|^{5/3}
 \right)^{9/5}
 \le C\tau^{6-15/P_f}\norm{\nabla q^f}{L^{P_f}}^3,
 \label{eq:forced-decay}\\
 &\tau^3\left(
 {\fintx}_{I_\tau}{\fintx}_{B_\tau^+}
 |q^h-(q^h)_{B_\tau^+}|^{5/3}
 \right)^{9/5}
 \le C_Q\tau^{12/5-9/Q}
 \norm{\nabla q^h}{L_t^{5/3}L_x^Q}^3.
 \label{eq:hom-decay}
\end{align}
\end{lemma}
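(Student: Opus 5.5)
The two pressure estimates are pure scaling and come first; the velocity decay needs a regularity statement for the flat limit system.

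\emph{Pressure estimates.} For \eqref{eq:forced-decay}, fix $t\in I_\tau$ and apply Poincar\'e on $B_\tau^+$ together with H\"older from $L^{P_f}$ to $L^{5/3}$:
\[
 {\fintx}_{B_\tau^+}|q^f-(q^f)_{B_\tau^+}|^{5/3}
 \le C\tau^{5/3}\Big({\fintx}_{B_\tau^+}|\nabla q^f|^{P_f}\Big)^{5/(3P_f)} .
\]
Averaging in time over $I_\tau$ and using H\"older in time once more gives
\[
 {\fintx}_{I_\tau}{\fintx}_{B_\tau^+}|q^f-(q^f)_{B_\tau^+}|^{5/3}
 \le C\tau^{5/3}\,\tau^{-5\cdot 5/(3P_f)}\norm{\nabla q^f}{L^{P_f}}^{5/3},
\]
since $|Q_\tau^+|\sim\tau^5$. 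Raising to the power $9/5$ and multiplying by $\tau^3$ yields the exponent $3+3-15/P_f=6-15/P_f$, which is \eqref{eq:forced-decay}.

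For \eqref{eq:hom-decay}, apply the same spatial Poincar\'e--H\"older step with $L^Q$, which gives the factor $\tau^{5/3}\tau^{-5/Q}$ after raising to the $5/3$ power. In time, the normalized average over $I_\tau$ of $\norm{\nabla q^h(t)}{L^Q}^{5/3}$ is at most $\tau^{-2}\norm{\nabla q^h}{L^{5/3}L^Q}^{5/3}$. Raising to $9/5$ and multiplying by $\tau^3$ gives the exponent $3+3-9/Q-18/5=12/5-9/Q$. This is \eqref{eq:hom-decay}.

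\emph{Velocity decay.} This is the main step: I want boundedness of $v$ near the flat boundary, plus H\"older control. The plan is as follows.

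\begin{enumerate}
\item Localize with a cutoff and split $v$ into a forced flat part and a homogeneous part, as in Step~3 of \Cref{prop:pressure-separation}.
\item The forced part solves the flat problem with data $\zeta g\in L^{P_f}$. By \Cref{lem:flat-nonzero-divergence} at the exponent $P_f$, it lies in $W^{1,2}_{P_f}$. Since $P_f>5/2$, parabolic embedding puts it in $C^{0,\alpha}_{\para}$ with $\alpha<2-5/P_f$, and it vanishes on $\{x_3=0\}$.
\item The homogeneous part solves the flat Stokes system with no-slip on $\{x_3=0\}$. Bootstrapping the local flat estimate \cite[Lemma 4.3]{Breit2025} on nested cylinders puts it in $L^{5/3}_tW^{2,Q}_x$ for every finite $Q$, with $\partial_t$ in the same class, so it is H\"older continuous up to the boundary. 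The pressure-driven time singularity is the known obstruction to Lipschitz bounds in time. However, vanishing on the boundary still gives $|v|\le C\tau^{\alpha'}$ on $Q_\tau^+$ for every $\alpha'<1$.
\item Combining the two parts, $\sup_{Q_\tau^+}|v|\le C\tau^{\alpha}$ with $\alpha=\min\{2-5/P_f,\alpha'\}$, so ${\fintx}_{Q_\tau^+}|v|^3\le C\tau^{3\alpha}$. Choosing $\alpha$ close to its upper limit gives every $\nu<\min\{6-15/P_f,3\}$.
\item All bounds depend only on $M$, because the energy, pressure and force norms control the localized data.
\end{enumerate}

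The hard point is the homogeneous boundary regularity in step~3. If the $L^{5/3}_t$ time integrability only delivers a weaker H\"older exponent in time, I would fall back on proving Campanato decay of $|v|^3$ directly from the $W^{2,Q}$ estimate. That works because only the average of $|v|^3$, not a pointwise bound, is needed.
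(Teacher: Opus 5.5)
Your pressure estimates are correct and match the paper's argument. You use spatial Poincar\'e on $B_\tau^+$ plus H\"older and apply H\"older in time after averaging, whereas the paper applies H\"older in the five-dimensional cylinder for $q^f$ and only in space for $q^h$; the exponents are identical. Your velocity argument follows the paper's route (cutoff, forced flat part at exponent $P_f$, homogeneous remainder), and the forced part is fine.

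The gap is step~3. Bootstrapping the local flat estimate improves only the spatial exponent, so you obtain $\partial_tv^h,\nabla^2v^h\in L^{5/3}_tL^Q_x$. The parabolic embedding of that class gives H\"older exponent $2-\tfrac65-\tfrac3Q=\tfrac45-\tfrac3Q$, not every exponent below $1$. Hence $|v^h|\le Cx_3^{\alpha'}$ holds only for $\alpha'<4/5$, which gives decay $\tau^\nu$ only for $\nu<12/5$. Neither the no-slip condition nor your fallback (averaging $|v|^3$ instead of a pointwise bound) repairs this, because the $L^{5/3}$-in-time pressure is a genuine obstruction. The paper's proof makes the same unjustified claim (``$v^h\in C^{0,\alpha}_{\para}$ there for every $\alpha<1$''). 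That claim is inconsistent with the paper's own remark that the local estimate does not change the time exponent $5/3$.

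A shear flow shows the range $\nu\in(12/5,3)$ is unattainable. Let $g=0$ and $h(t)=|t|^{-\gamma}$ on $(-1/4,0)$ with $1/2<\gamma<3/5$. Set $q=-x_1h(t)$ and $v=(u(t,x_3),0,0)$, where
\[
 u(t,x_3)=\int_{-1/4}^{t}h(s)\,\operatorname{erf}\Bigl(\frac{x_3}{2\sqrt{t-s}}\Bigr)\dd s ,
\]
so that $\partial_tu-\partial_3^2u=h$ and $u(t,0)=0$. Then $(v,q)$ solves the homogeneous flat Stokes system with no slip. Moreover $|u|\le\norm{h}{L^1}$, $|\partial_3u|\le C|t|^{1/2-\gamma}$ (square integrable), and $q\in L^{5/3}$. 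One also checks that $\partial_tv,\nabla^2v,\nabla q\in L^{5/3}_tL^\infty_x$, so the example lies in every class your bootstrap can produce.

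Take $(t,x)\in Q_\tau^+$. The portion $s\in(t-x_3^2,t)$ of the integral alone gives $u\ge c\,x_3^2\tau^{-2\gamma}$. Hence ${\fintx}_{Q_\tau^+}|v|^3\ge c\,\tau^{6-6\gamma}$, and $6-6\gamma\downarrow 12/5$ as $\gamma\uparrow 3/5$. So for $12/5<\nu<\min\{6-15/P_f,3\}$, a nonempty range once $P_f>25/6$, the claimed estimate is false and no argument can close step~3.

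What your decomposition does prove is the lemma with $3$ replaced by $12/5$, i.e.\ $\nu<\min\{6-15/P_f,12/5\}$. This is enough for \Cref{lem:one-step}: there $\theta<\mu_Q<12/5$, so one only needs some $\nu\in(\theta,\min\{\mu_f,12/5\})$.
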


\begin{proof}
Choose a space--time cutoff $\chi$ which is one on $Q_{3/8}^+$ and is
supported in $Q_{1/2}^+$. Solve the flat half-space Stokes system with
forcing $\chi g$, zero divergence, zero boundary trace and zero initial
value, and denote its solution by $(v^f,q^f)$. By
\Cref{lem:flat-nonzero-divergence} with $D=0$ and $a=p=P_f$, flat maximal
regularity gives
\begin{equation}\label{eq:flat-velocity-forced}
 \norm{\partial_tv^f}{L^{P_f}}
 +\norm{\nabla^2v^f}{L^{P_f}}
 +\norm{\nabla q^f}{L^{P_f}}
 \le C\norm{g}{L^{P_f}}.
\end{equation}
Thus the parabolic Sobolev embedding yields
$v^f\in C_{\para}^{0,\alpha}$ for every
$\alpha<\min\{2-5/P_f,1\}$.

The remainder $v^h:=v-v^f$ solves a homogeneous flat Stokes system in
$Q_{3/8}^+$. Starting from the assumed energy and pressure bounds, repeated
local boundary estimates
\cite[Lemma 4.3 with $\varphi=0$]{Breit2025} on nested half-cylinders give
$v^h\in C_{\para}^{0,\alpha}$ there for every $\alpha<1$. Both pieces have
zero trace on the flat boundary. Consequently, in a smaller half-cylinder,
\[
 |v(t,x)|\le Cx_3^\alpha
 \qquad\text{for every }
 \alpha<\min\left\{2-\frac5{P_f},1\right\}.
\]
Averaging the cube and choosing $3\alpha>\nu$ proves
\eqref{eq:flat-velocity-decay}. Notice that the decomposition is essential:
the local estimate used for the homogeneous remainder raises spatial
regularity but, by itself, does not change the time exponent from $5/3$ to
$P_f$.

Spatial Poincar\'e followed by H\"older in the five-dimensional parabolic
cylinder gives
\[
 \norm{q^f-(q^f)_{B_\tau^+}}{L^{5/3}(Q_\tau^+)}
 \le C\tau^{4-5/P_f}\norm{\nabla q^f}{L^{P_f}(Q_\tau^+)}.
\]
Since the pressure part of the excess equals $\tau^{-6}$ times the cube of the
$L^{5/3}$ norm, this is \eqref{eq:forced-decay}. For the homogeneous part,
perform Poincar\'e and H\"older only in space:
\[
 \norm{q^h-(q^h)_{B_\tau^+}}{L_t^{5/3}L_x^{5/3}(Q_\tau^+)}
 \le C\tau^{14/5-3/Q}
 \norm{\nabla q^h}{L_t^{5/3}L_x^Q(Q_\tau^+)}.
\]
Cubing and multiplying by $\tau^{-6}$ gives \eqref{eq:hom-decay}.
\end{proof}

\begin{remark}
As $Q\to\infty$, the homogeneous pressure exponent approaches $12/5$. Thus
one may choose any final excess exponent
\begin{equation}\label{eq:beta-max}
 0<\beta<\min\left\{6-\frac{15}{P_f},\frac{12}{5}\right\}.
\end{equation}
The boundary exponent $P_b$ does not occur in \eqref{eq:beta-max}.
More explicitly, given $\alpha$ in \eqref{eq:holder-range}, choose numbers
and a finite $Q$ so that
\[
 3\alpha<\beta<\theta<
 \min\left\{6-\frac{15}{P_f},\frac{12}{5}-\frac9Q\right\}.
\]
This order of choices is used in the iteration below.
\end{remark}

\section{Physical-scale iteration}\label{sec:iteration}

Scaling \eqref{eq:one-step} back to a cylinder of radius $r$ gives
\begin{equation}\label{eq:physical-one-step}
 r^3E_{\tau r}(u,\pi)
 \le C_0\tau^\theta\left[
 r^3E_r(u,\pi)
 +r^9\left({\fintx}_{Q_r}|f|^{P_f}\right)^{3/P_f}
 \right].
\end{equation}
After division by $r^3$,
\begin{equation}\label{eq:physical-one-step-2}
 E_{\tau r}\le C_0\tau^\theta\big(E_r+\mathfrak F(r)\big),
 \qquad
 \mathfrak F(r):=r^6\left({\fintx}_{Q_r}|f|^{P_f}\right)^{3/P_f}.
\end{equation}
Since $|Q_r|\simeq r^5$,
\begin{equation}\label{eq:force-modulus}
 \mathfrak F(r)\le Cr^{\mu_f}\norm{f}{L^{P_f}(I\times\Omega)}^3,
 \qquad \mu_f=6-\frac{15}{P_f}.
\end{equation}

Choose $0<\beta<\min\{\theta,\mu_f\}$ and then fix $\tau$ so small that
\begin{equation}\label{eq:contraction}
 C_0\tau^\theta\le\frac12\tau^\beta.
\end{equation}
By \Cref{cor:uniform-threshold}, all boundary charts satisfy the geometric
smallness condition below a common radius $R_0$. The unit-scale smallness
hypothesis at a physical radius $r$ is, by
\eqref{eq:excess-scaling},
\begin{equation}\label{eq:physical-smallness}
 r^3\bigl(E_r+\mathfrak F(r)\bigr)\le\eps_*.
\end{equation}
Choose $R_f>0$ so that
$r^{3+\mu_f}\norm{f}{L^{P_f}}^3$ is below the force threshold whenever
$r<R_f$. Suppose that, at a fixed boundary point, there is an
$R<\min\{R_0,R_f\}$ for which $R^3E_R$ is below the fluid threshold. Then
\eqref{eq:physical-one-step-2} and \eqref{eq:force-modulus} propagate
\eqref{eq:physical-smallness} to every radius $\tau^kR$. Writing
$D_k=E_{\tau^kR}$ gives
\begin{equation}\label{eq:recurrence}
 D_{k+1}\le\frac12\tau^\beta D_k
 +C\tau^\theta R^{\mu_f}\tau^{k\mu_f}\norm{f}{L^{P_f}}^3.
\end{equation}
A discrete convolution yields
\begin{equation}\label{eq:iterated-decay}
 E_{\tau^kR}(u,\pi)
 \le C\tau^{k\beta}
 \left(E_R(u,\pi)+R^{\mu_f}\norm{f}{L^{P_f}}^3\right).
\end{equation}
Interpolation between adjacent discrete radii gives the same estimate for all
$0<r<R$. We next make explicit why a boundary-centred estimate yields a
Campanato estimate at every nearby interior point.

For a boundary point $\widehat z=(\widehat t,\widehat x)$, let
$E_r^{\rm b}(\widehat z)$ denote \eqref{eq:excess} in the tangent-plane chart
at $\widehat x$ and let
\[
 \mathcal A_r^{\rm b}(\widehat z)
 :=r^3\bigl(E_r^{\rm b}(\widehat z)+\mathfrak F_{\widehat z}(r)\bigr).
\]
The finite atlas, the tangent-plane regraphing lemma and the uniform
bi-Lipschitz bounds imply that physical cylinders and their flattened images
are comparable, with constants independent of $\widehat z$ and
$0<r<R_0$. We shall use this comparability without further comment.

\begin{lemma}[Stability of the boundary starting scale]
\label{lem:nearby-start}
There are $c_0\in(0,1/16)$ and $\eps_{\rm n}>0$, depending only on the
structural data and the thresholds in \Cref{lem:one-step}, with the following
property. If $z_0\in I\times\partial\Omega$, $0<R<R_0$ and
\begin{equation}\label{eq:start-at-z0}
 \mathcal A_R^{\rm b}(z_0)
 +R^{3+\mu_f}\norm{f}{L^{P_f}}^3\le\eps_{\rm n},
\end{equation}
then every boundary point
\[
 \widehat z\in Q_{c_0R}^-(z_0)\cap(I\times\partial\Omega)
\]
has an admissible starting scale $R/4$. Moreover, after decreasing
$\eps_{\rm n}$ once, the iteration gives
\begin{equation}\label{eq:uniform-boundary-decay}
 E_s^{\rm b}(\widehat z)
 \le C_Rs^\beta,\qquad 0<s<R/16,
\end{equation}
where $C_R$ is independent of $\widehat z$.
\end{lemma}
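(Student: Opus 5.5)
The plan is to compare the excess at $\widehat z$ on scale $R/4$ with the excess at $z_0$ on scale $R$, using the inclusion of cylinders. We then feed the resulting small starting value into the iteration \eqref{eq:physical-one-step-2}--\eqref{eq:iterated-decay}.

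\emph{Step 1: inclusion.} Fix $c_0<1/16$ small relative to the uniform bi-Lipschitz constants of the tangent-plane charts (\Cref{lem:tangent-regraphing}, \Cref{cor:uniform-threshold}). If $\widehat z\in Q_{c_0R}^-(z_0)$, then $I_{R/4}(\widehat t)\subset I_R(t_0)$, since $c_0^2R^2+R^2/16<R^2$. The physical ball $B_{R/4}(\widehat x)\cap\Omega$ is contained in $B_{R/2}(x_0)\cap\Omega$. After flattening in the two charts, the half-ball $B^+_{R/4}$ at $\widehat x$ corresponds to a subset of $B^+_R$ at $x_0$. Its measure is comparable, with constants independent of $\widehat z$ and $R<R_0$.

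\emph{Step 2: excess comparison.} The velocity part gives $\fintx_{Q^+_{R/4}(\widehat z)}|u|^3\le C\fintx_{Q^+_R(z_0)}|u|^3$ from the inclusion, comparable measures and bounded Jacobians. For the pressure part, use that for every constant $c(t)$,
\[
\fintx_{B^+_{R/4}}|\pi-(\pi)_{B^+_{R/4}}|^{5/3}\le C\fintx_{B^+_{R/4}}|\pi-c(t)|^{5/3}.
\]
Choose $c(t)=(\pi)_{B_R^+(z_0)}(t)$, enlarge the domain, and integrate in time. Together with $\mathfrak F_{\widehat z}(R/4)\le C\,\mathfrak F_{z_0}(R)$, which follows from the same inclusion, this yields
\[
\mathcal A^{\rm b}_{R/4}(\widehat z)\le C_1\mathcal A^{\rm b}_R(z_0).
\]
Since $R/4<R_0$, the geometric threshold $\eta_{\widehat x}(R/4)\le\delta_*$ holds automatically by \Cref{cor:uniform-threshold}. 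The force term satisfies $(R/4)^{3+\mu_f}\|f\|^3\le R^{3+\mu_f}\|f\|^3\le\eps_{\rm n}$. Choosing $\eps_{\rm n}\le\eps_*/(2C_1+2)$ then makes $R/4$ an admissible starting scale in the sense of \eqref{eq:physical-smallness}, and also ensures that the force condition defining $R_f$ is satisfied.

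\emph{Step 3: uniform decay.} Starting from $R/4$ at $\widehat z$, run \eqref{eq:recurrence}. The smallness propagates to every radius $\tau^k R/4$ because of \eqref{eq:contraction} and \eqref{eq:force-modulus}, provided $\eps_{\rm n}$ is decreased once more so that the forcing contribution stays below half the threshold. This decrease is the ``once'' in the statement. Then \eqref{eq:iterated-decay} gives
\[
E^{\rm b}_{\tau^kR/4}(\widehat z)\le C\tau^{k\beta}\bigl(E^{\rm b}_{R/4}(\widehat z)+R^{\mu_f}\|f\|^3\bigr)\le C\tau^{k\beta}R^{-3}\eps_{\rm n}.
\]
For $s<R/16$, pick $k$ with $\tau^{k+1}R/4\le s<\tau^kR/4$. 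The cylinder comparison gives $E_s\le C\tau^{-5}E_{\tau^kR/4}$, and therefore $E_s\le C_R s^\beta$ with $C_R=C\tau^{-5-\beta}(R/4)^{-\beta}R^{-3}\eps_{\rm n}$. This constant is independent of $\widehat z$.

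\emph{Main obstacle.} The delicate step is Step 2 across different tangent-plane charts. The rotations $Q_{\widehat x}$ and $Q_{x_0}$ differ, so the half-balls are not nested literally. One needs a uniform statement that the flattened image of $B^+_{R/4}(\widehat x)$ sits inside a set comparable to $B^+_R(x_0)$. I would obtain this from the $C^{1,\gamma_b}$ modulus: the normals at $\widehat x$ and $x_0$ differ by $O(R^{\gamma_b})$. Working directly with physical sets $B_\rho(x)\cap\Omega$ and the uniform Jacobian bounds then avoids composing the flattening maps.
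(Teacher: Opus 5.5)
Your proposal is correct and follows the paper's argument: you include the radius-$R/4$ physical cylinder at $\widehat z$ in the radius-$R$ cylinder at $z_0$ (with $c_0$ fixed by the uniform bi-Lipschitz constants), use mean comparison for the pressure, note that geometric smallness is automatic because $R<R_0$, and then run \eqref{eq:iterated-decay} with interpolation between consecutive radii. The only slip is harmless: when you compare $E_s$ with $E_{\tau^kR/4}$, the pressure part costs $\tau^{-9}$ rather than $\tau^{-5}$, because the averaged integral is raised to the power $9/5$; since $\tau$ is fixed, this is absorbed into $C_R$.
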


\begin{proof}
After choosing $c_0$ according to the atlas constants, the physical cylinder
of radius $R/4$ at $\widehat z$ is contained in the physical cylinder of
radius $R$ at $z_0$. For measurable sets $A\subset B$ of comparable measure
and $1<p<\infty$,
\begin{equation}\label{eq:mean-comparison}
 \int_A|h-(h)_A|^p
 \le 2^p\int_B|h-(h)_B|^p.
\end{equation}
Applying this inequality at almost every time to the pressure, and using the
corresponding elementary inclusion estimate for the velocity, gives
\[
 \mathcal A_{R/4}^{\rm b}(\widehat z)
 \le C\mathcal A_R^{\rm b}(z_0)
 +C R^{3+\mu_f}\norm{f}{L^{P_f}}^3.
\]
Both terms are admissible when $\eps_{\rm n}\le\eps_*/(2C)$. Geometry is
already below its independent threshold because $R<R_0$. Thus
\eqref{eq:iterated-decay}, applied at every
$\widehat z$, and interpolation between consecutive discrete radii yield
\eqref{eq:uniform-boundary-decay}. All constants are uniform over the finite
atlas.
\end{proof}

For an interior cylinder $Q_r^-(z)\Subset I\times\Omega$, define
\begin{equation}\label{eq:interior-excess}
\begin{aligned}
 \widetilde E_r(z)
 &:={\fintx}_{Q_r^-(z)}
 |u-(u)_{Q_r^-(z)}|^3\\
 &\quad+r^3\left(
 {\fintx}_{I_r^-(t)}{\fintx}_{B_r(x)}
 |\pi-(\pi)_{B_r(x)}(\sigma)|^{5/3}\dd y\dd\sigma
 \right)^{9/5}.
\end{aligned}
\end{equation}

\begin{lemma}[Interior excess decay]\label{lem:interior-decay}
Fix the parameters $\tau,\theta,\beta$ used above. There is
$\eps_{\rm int}>0$ such that, whenever
$Q_\rho^-(z)\Subset I\times\Omega$ and
\[
 \rho^3\bigl(\widetilde E_\rho(z)+\mathfrak F_z(\rho)\bigr)
 \le\eps_{\rm int},
\]
one has
\begin{equation}\label{eq:interior-iteration}
 \widetilde E_r(z)
 \le C\left(\frac r\rho\right)^\beta
 \left(\widetilde E_\rho(z)
 +\rho^{\mu_f}\norm{f}{L^{P_f}}^3\right),
 \qquad 0<r<\frac{\rho}{2}.
\end{equation}
\end{lemma}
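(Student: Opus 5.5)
The plan is to prove an interior analogue of \Cref{lem:one-step} by the same two-step route (blow-up contradiction, then iteration), only simpler: there is no boundary chart, so the geometric parameter $\eta$ is absent and the limit system is the flat interior Stokes system.

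\emph{Step 1: interior one-step decay.} The claim to establish is the following. With the same $C_0$ (enlarged if necessary, but independent of $\tau$), for every $\tau\in(0,\tau_0)$ there is $\eps_{\rm int}(\tau)$ such that
\[
\widetilde E_1(u,\pi)+\norm{f}{L^{P_f}(Q_1)}^3\le\eps_{\rm int}
\quad\Longrightarrow\quad
\widetilde E_\tau\le C_0\tau^\theta\bigl(\widetilde E_1+\norm{f}{L^{P_f}}^3\bigr).
\]
This is proved by contradiction.
\begin{enumerate}
\item Normalize $v_m=(u_m-(u_m)_{Q_1})/\lambda_m$ and $q_m=(\pi_m-(\pi_m)_{B_1}(t))/\lambda_m$. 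The subtraction of the velocity mean introduces a transport term $(a_m\cdot\nabla)v_m$ with $|a_m|\le C\lambda_m^{-1}\cdot\lambda_m$ bounded. This drift is bounded rather than vanishing, which is exactly how the interior CKN/Lin argument handles it.
\item The local energy inequality gives uniform energy bounds.
\item Aubin--Lions gives $v_m\to v$ strongly in $L^3$.
\item The limit solves $\partial_tv+(a\cdot\nabla)v-\Delta v+\nabla q=g$, a Stokes system with constant drift. After a Galilean change of variables this is the plain Stokes system.
\end{enumerate}

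\emph{Step 2: the interior pressure.} The interior pressure is handled by the same separation as in \Cref{prop:pressure-separation}, now with $\Hh$ replaced by $\R^3$. Localize by a cutoff and split the pressure as
\[
q_m=q_m^{\rm c}+q_m^f+q_m^h.
\]
\begin{itemize}
\item The nonlinear part $q_m^{\rm c}$ is controlled at the critical pair $(5/3,15/14)$ through $\dot W^{1,15/14}(\R^3)\hookrightarrow L^{5/3}$ with factor $\lambda_m\to0$.
\item The forced part $q_m^f$ is bounded at exponent $P_f$.
\item The harmonic part $q_m^h$ is spatially harmonic inside the cutoff region, so its gradient lies in $L^{5/3}_tL^\infty_x$. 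This is even better than $L_t^{5/3}L_x^Q$.
\end{itemize}
The velocity excess of the limit decays like $\tau^{\nu}$ with $\nu<\min\{\mu_f,3\}$, since interior Stokes solutions with $L^{P_f}$ force are $C^{0,\alpha}_{\para}$ for $\alpha<2-5/P_f$, and now the mean is subtracted. The pressure pieces then decay as in \Cref{lem:flat-decay}, at rates $\tau^{\mu_f}$ and $\tau^{\mu_Q}$. This contradicts the failure assumption once $C_0>2C$.

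\emph{Step 3: iteration.} Scaling gives
\[
\widetilde E_{\tau r}\le C_0\tau^\theta\bigl(\widetilde E_r+\mathfrak F(r)\bigr),
\qquad
\mathfrak F(r)\le Cr^{\mu_f}\norm{f}{L^{P_f}}^3 .
\]
Because the same $\tau$ with $C_0\tau^\theta\le\frac12\tau^\beta$ is used, the recurrence \eqref{eq:recurrence} holds with $D_k=\widetilde E_{\tau^k\rho}(z)$. Smallness propagates because $(\tau^{k}\rho)^3D_k$ decreases, and the force term is small since $\rho^{3+\mu_f}$ is small. The discrete convolution gives \eqref{eq:interior-iteration} at radii $\tau^k\rho$. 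For intermediate radii, use $\widetilde E_r\le C(\tau)\widetilde E_{\tau^k\rho}$ when $\tau^{k+1}\rho\le r\le\tau^k\rho$, which follows from \eqref{eq:mean-comparison} applied to both velocity and pressure.

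\emph{Main obstacle.} The bounded drift $a_m$ is the main obstacle. It prevents direct reuse of the boundary argument, and the decay rate $\tau^\nu$ for the limit must be uniform in $|a|\le C$. I would first try the Galilean shift $x\mapsto x-at$. It maps parabolic cylinders into slightly larger ones for $\tau<\tau_0$ with $|a|\tau_0$ small. This costs only a fixed constant, absorbed into $C_0$ independently of $\tau$.
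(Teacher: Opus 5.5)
Your overall route is the paper's: an interior version of the blow-up in \Cref{lem:one-step}, followed by the same recurrence. The paper's proof is only this pointer plus a reference to the customary CKN iteration. However, the step you single out as the main obstacle is resolved by a claim that does not hold. Writing $u_m=a_m+\lambda_m v_m$ with $a_m=(u_m)_{Q_1}$, the normalized equation is
\[
 \partial_t v_m+(a_m\cdot\nabla)v_m+\lambda_m(v_m\cdot\nabla)v_m-\Delta v_m+\nabla q_m=g_m .
\]
So the drift coefficient is the rescaled mean velocity itself, $a_m=\rho\,(u)_{Q_\rho^-(z)}$ in physical variables; no factor $\lambda_m^{-1}\cdot\lambda_m$ appears. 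The hypothesis $\rho^3\bigl(\widetilde E_\rho(z)+\mathfrak F_z(\rho)\bigr)\le\eps_{\rm int}$ controls only oscillations of $u$ and $\pi$, so nothing bounds $a_m$.

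Without such a bound the argument fails at its first link. The local energy inequality for $v_m$ (obtained from that of $u_m$ after removing the terms linear in $v_m$ via the weak formulation) contains $\frac12\iint|v_m|^2\,a_m\cdot\nabla\zeta$. Hence you get neither the uniform energy bound, nor Aubin--Lions compactness, nor a limit system. Your Galilean shift also needs $|a|\tau_0$ small, i.e.\ $\tau_0$ and $C_0$ depending on a bound for $|a|$. This is not cosmetic: in the comoving frame the unit cylinder becomes a slanted tube whose largest backward parabolic cylinder at the tip has radius about $|a|^{-1/2}$. Interior Stokes estimates therefore cannot give decay from scale $1$ to scale $\tau$ uniformly in $a$. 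In the Lin and Ladyzhenskaya--Seregin arguments you invoke, the drift bound is an explicit hypothesis (smallness of $\fintx|u|^3$, respectively a bound on the mean), not a consequence of oscillation smallness.

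The repair is to add such a hypothesis, e.g.\ $\rho\,|(u)_{Q_\rho^-(z)}|\le M$, let $\tau_0$, $C_0$ and $\eps_{\rm int}$ depend on $M$, and propagate the bound through the iteration. With $r_k=\tau^k\rho$ and $m_k=r_k\,|(u)_{Q_{r_k}^-(z)}|$, comparing means on nested cylinders gives
\[
 m_{k+1}\le\tau m_k+\tau^{-2/3}\bigl(r_k^3\widetilde E_{r_k}(z)\bigr)^{1/3}.
\]
Thus $m_k\le M$ for every $k$ once $m_0\le M/2$, $\tau\le1/2$ and $\tau^{-2/3}\eps_{\rm int}^{1/3}\le M/2$. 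The paper's statement and proof omit this condition too, but it is available where the lemma is used. In \Cref{prop:patching} the starting bound comes from a boundary excess, which controls $\fintx|u|^3$ itself rather than an oscillation, so $m_0\le C_R\,d^{1+\beta/3}$ is small. With the drift bounded, the rest of your plan goes through: the interior splitting with a spatially harmonic remainder, velocity decay with $\nu<\min\{\mu_f,3\}$, and the recurrence with interpolation between radii.
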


\begin{proof}
This is the interior counterpart of \Cref{lem:one-step}. The contradiction
sequence now converges to a Stokes system in a full cylinder, so no boundary
chart or geometric threshold occurs. Replacing the boundary Stokes estimates
by their interior versions
\cite[Lemmas 4.4 and 4.5]{Breit2025} gives the same one-step inequality and
the iteration leading to \eqref{eq:interior-iteration}. Equivalently, this is
the customary interior epsilon-regularity iteration of
\cite{CKN}, written with the $L^{5/3}$ pressure excess
\eqref{eq:interior-excess}.
\end{proof}

\begin{proposition}[Interior--boundary Campanato patching]
\label{prop:patching}
Assume \eqref{eq:start-at-z0}. After decreasing the relative neighbourhood of
$z_0$, there is a constant $C_R$ such that
\begin{equation}\label{eq:patched-campanato}
 \widetilde E_r(z)\le C_Rr^\beta
\end{equation}
for every interior point $z$ in that neighbourhood and every sufficiently
small $r$. The same velocity-oscillation bound holds when the cylinder meets
the boundary.
\end{proposition}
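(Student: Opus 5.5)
The plan is the standard two-regime patching argument. Fix $z=(t,x)$ interior, close to $z_0$, and let $d:=\dist(x,\partial\Omega)$. Choose a nearest boundary point $\widehat x$ and set $\widehat z=(t,\widehat x)$. Shrinking the neighbourhood makes $d$ small, so that $\widehat z\in Q_{c_0R}^-(z_0)\cap(I\times\partial\Omega)$. \Cref{lem:nearby-start} then gives the boundary decay \eqref{eq:uniform-boundary-decay} at $\widehat z$, with a constant $C_R$ independent of $\widehat z$.

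We split into two cases according to the size of $r$ relative to $d$.

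\emph{Case $r\ge d/4$.} Here $Q_r^-(z)\subset Q_{Cr}^-(\widehat z)$ in physical coordinates. By the bi-Lipschitz comparability of flattened and physical cylinders, this is contained in the flattened boundary cylinder of radius $C'r$ at $\widehat z$.

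\begin{itemize}
\item Velocity: $|u-(u)_{Q_r^-(z)}|^3\lesssim |u|^3+|(u)|^3$, and Jensen together with the inclusion bound $\fintx_{Q_r^-(z)}|u-(u)_{Q_r^-(z)}|^3\le C\fintx_{Q_{C'r}^+}|u|^3$.
\item Pressure: apply \eqref{eq:mean-comparison} at a.e.\ time with $A=B_r(x)$ and $B$ the image of $B^+_{C'r}$. The measures are comparable because $r\gtrsim d$.
\end{itemize}

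Together these give $\widetilde E_r(z)\le C E^{\rm b}_{C'r}(\widehat z)\le C_R r^\beta$ whenever $C'r<R/16$. The same computation, with the interior cylinder replaced by the part of the cylinder inside $\Omega$, gives the oscillation bound claimed for cylinders meeting the boundary.

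\emph{Case $r<d/4$.} Take $\rho=d/4$, so that $Q_\rho^-(z)\Subset I\times\Omega$. We want \Cref{lem:interior-decay} at scale $\rho$, so we need
\[
 \rho^3\bigl(\widetilde E_\rho(z)+\mathfrak F_z(\rho)\bigr)\le\eps_{\rm int}.
\]
The first case gives $\widetilde E_\rho(z)\le C_R\rho^\beta$, hence $\rho^3\widetilde E_\rho\le C_R\rho^{3+\beta}$. Also $\rho^3\mathfrak F_z(\rho)\le C\rho^{3+\mu_f}\norm{f}{L^{P_f}}^3$ by \eqref{eq:force-modulus}. Both are small once $d$ is small, which is arranged by further shrinking the neighbourhood of $z_0$. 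Then \eqref{eq:interior-iteration} gives
\[
 \widetilde E_r(z)\le C(r/\rho)^\beta\bigl(C_R\rho^\beta+\rho^{\mu_f}\norm{f}{L^{P_f}}^3\bigr)\le C_R r^\beta,
\]
using $\mu_f>\beta$ and $\rho\le1$. For $\rho/2\le r<\rho$ the estimate follows directly by comparison with scale $\rho$.

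The main obstacle is the first case, specifically the pressure comparison. The interior excess normalizes by means over full balls $B_r(x)$, while the boundary excess uses the flattened half-ball $B^+_{C'r}$ in a rotated and graph-flattened chart. I would handle this by working at a.e.\ fixed time with \eqref{eq:mean-comparison}, pulled back through the uniformly bi-Lipschitz $\Phi_r$, whose Jacobian is controlled by \Cref{lem:coefficients}. Physical sets then correspond to flattened sets of comparable measure, with $L^{5/3}$ norms comparable up to constants independent of $\widehat z$. A second point to check is that the velocity term in $E^{\rm b}$ is not an oscillation. This is favourable: $|u|^3$ dominates the oscillation, and the zero trace is what makes the boundary excess control $u$ itself.
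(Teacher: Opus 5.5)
Your proposal is correct and follows essentially the same route as the paper. You split according to whether $r$ is large or small compared with $d=\dist(x,\partial\Omega)$; in the first regime you compare with the boundary cylinder at the nearest boundary point via \eqref{eq:mean-comparison} and \eqref{eq:uniform-boundary-decay}, and in the second you start \Cref{lem:interior-decay} at $\rho=d/4$ using $\mu_f>\beta$. The only difference is cosmetic: the paper splits at $d/8=\rho/2$, so the interior lemma covers the whole second regime, whereas your split at $d/4$ needs the extra (valid) direct comparison for $\rho/2\le r<\rho$.
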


\begin{proof}
Let $z=(t,x)$ be an interior point in a smaller cylinder supplied by
\Cref{lem:nearby-start}, put $d=\dist(x,\partial\Omega)$, and choose a nearest
boundary point $\widehat x$. If $r\ge d/8$, the cylinder at $z$ is contained
in a boundary cylinder of radius $c_1r$ centred at
$\widehat z=(t,\widehat x)$. By choosing the neighbourhood small enough,
$c_1r<R/16$, and \eqref{eq:mean-comparison} together with
\eqref{eq:uniform-boundary-decay} gives
\[
 \widetilde E_r(z)\le C E_{c_1r}^{\rm b}(\widehat z)
 \le C_Rr^\beta.
\]
The identical comparison, without subtracting the velocity average on the
right, covers cylinders meeting the boundary.

Suppose now that $r<d/8$ and set $\rho=d/4$. Then
$B_{2\rho}(x)\subset\Omega$. Comparison with the boundary cylinder of radius
$c_2d$ at $\widehat z$ yields
\begin{equation}\label{eq:interior-start-from-boundary}
 \widetilde E_\rho(z)\le C_Rd^\beta.
\end{equation}
Consequently,
\[
 \rho^3\bigl(\widetilde E_\rho(z)+\mathfrak F_z(\rho)\bigr)
 \le C_R\bigl(d^{3+\beta}+d^{3+\mu_f}\bigr).
\]
The right-hand side is below $\eps_{\rm int}$ throughout a sufficiently small
fixed neighbourhood of $z_0$. Applying \Cref{lem:interior-decay} and using
$\mu_f>\beta$ in \eqref{eq:interior-start-from-boundary} gives
\[
 \widetilde E_r(z)
 \le C_R(r/\rho)^\beta(\rho^\beta+\rho^{\mu_f})
 \le C_Rr^\beta.
\]
This proves \eqref{eq:patched-campanato} in both distance regimes.
\end{proof}

The velocity part of \eqref{eq:patched-campanato} is precisely
\[
 {\fintx}_{Q_r^-(z)\cap(I\times\Omega)}
 |u-(u)_{Q_r^-(z)\cap(I\times\Omega)}|^3\le C_Rr^\beta.
\]
Campanato's characterization on uniformly Lipschitz graph cylinders (see, for
example, \cite{GiaquintaMartinazzi}), followed by the bi-Lipschitz change of
coordinates, therefore gives
\begin{equation}\label{eq:holder-from-beta}
 u\in C^{0,\alpha}_{\para}\quad\text{for every }0<\alpha<\frac\beta3.
\end{equation}
Combining \eqref{eq:beta-max} and \eqref{eq:holder-from-beta} proves
\eqref{eq:holder-range}.

\section{The boundary singular set}\label{sec:singular}

Let $\mathcal R_\partial$ be the set of boundary points that possess a
relative parabolic neighbourhood on which the conclusion of
\Cref{prop:patching} holds, and define
\begin{equation}\label{eq:singular-set-definition}
 \Sigma:=(I\times\partial\Omega)\setminus\mathcal R_\partial.
\end{equation}
By its definition $\mathcal R_\partial$ is relatively open, and hence
$\Sigma$ is relatively closed.

Once $r<R_0$, geometric smallness is automatic. The rescaled force
$r^3\mathfrak F_{z_0}(r)$ tends to zero uniformly by
\eqref{eq:force-modulus}. If \eqref{eq:physical-smallness} held at some
sufficiently small radius about $z_0$, then
\Cref{lem:nearby-start,prop:patching} would imply
$z_0\in\mathcal R_\partial$. Consequently, for every $z_0\in\Sigma$ and all
sufficiently small $r$,
\begin{equation}\label{eq:failed-smallness}
 r^3E_r^{\rm b}(z_0)\ge c\eps_*.
\end{equation}
In physical coordinates define
\begin{align}
 \mathcal C_u(z_0,r)&:=r^{-2}\int_{Q_r(z_0)\cap(I\times\Omega)}|u|^3,
 \label{eq:Cu}\\
 \mathcal C_\pi(z_0,r)&:=
 \left(r^{-5/3}\int_{Q_r(z_0)\cap(I\times\Omega)}|\pi|^{5/3}\right)^{9/5}.
 \label{eq:Cp}
\end{align}
Uniform chart comparability and \eqref{eq:failed-smallness} imply that there
is $c\eps_*>0$ such that, at a singular point and every sufficiently small
scale,
\begin{equation}\label{eq:bad-density}
 \mathcal C_u(z_0,r)+\mathcal C_\pi(z_0,r)\ge c\eps_*.
\end{equation}
Here the pressure in $E_r$ is normalized by its spatial mean, while
\eqref{eq:Cp} uses the unnormalized pressure; the elementary
mean-subtraction inequality bounds the former by a constant times the latter.
If the velocity alternative in \eqref{eq:bad-density} occurs, then
\[
 \int_{Q_r}|u|^3\ge c\eps_*r^2.
\]
Since $|Q_r|\simeq r^5$, H\"older's inequality gives
\begin{equation}\label{eq:u-density}
 \int_{Q_r(z_0)}|u|^{10/3}
 \ge c\eps_*^{10/9}r^{5/3}.
\end{equation}
The pressure alternative directly yields
\begin{equation}\label{eq:p-density}
 \int_{Q_r(z_0)}|\pi|^{5/3}
 \ge c\eps_*^{5/9}r^{5/3}.
\end{equation}
Thus every singular point and every sufficiently small scale satisfy
\begin{equation}\label{eq:joint-density}
 \int_{Q_r(z_0)}\left(|u|^{10/3}+|\pi|^{5/3}\right)
 \ge c_{\eps_*}r^{5/3}.
\end{equation}

We now spell out the covering step. On space--time set
\begin{equation}\label{eq:parabolic-metric}
 d_{\para}\bigl((t,x),(s,y)\bigr)
 :=\max\{|x-y|,|t-s|^{1/2}\},
 \qquad
 \mathcal P_r(t,x):=(t-r^2,t+r^2)\times B_r(x).
\end{equation}
The sets $\mathcal P_r$ are balls up to an immaterial constant for
$d_{\para}$. We use them, rather than backward cylinders, in the covering
selection. This point is important: a fixed dilation of a selected backward
cylinder need not cover a later backward cylinder that intersects it.

\begin{proposition}[Parabolic fine covering]\label{prop:vitali-covering}
Let $K\subset\Sigma$ be compact. Then
\[
 \mathcal H_{\para}^{5/3}(K)=0.
\]
\end{proposition}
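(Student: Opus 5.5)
We prove the claim by the classical Vitali-type covering argument of Caffarelli--Kohn--Nirenberg, run with the centred sets $\mathcal P_r$ and the density bound \eqref{eq:joint-density}. Set $h:=|u|^{10/3}+|\pi|^{5/3}$, extended by zero outside $I\times\Omega$. Then $h\in L^1(I\times\Omega)$: the energy class gives $u\in L^{10/3}$, and $\pi\in L^{5/3}(I;W^{1,15/14})\subset L^{5/3}(I;L^{5/3})$ on the bounded domain. First transfer \eqref{eq:joint-density} from backward cylinders to centred sets. Since $Q_r(z_0)\subset\mathcal P_r(z_0)$, every $z_0\in K$ has some $r_0(z_0)>0$ with
\[
\int_{\mathcal P_r(z_0)}h\ge c_{\eps_*}r^{5/3},\qquad 0<r<r_0(z_0).
\]

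Next use absolute continuity. Fix $\delta>0$. Since $K\subset I\times\partial\Omega$ and $|I\times\partial\Omega|=0$ (the boundary is a Lipschitz graph), $|K|=0$. Hence for every $\sigma>0$ there is an open set $V\supset K$ with $\int_V h<\sigma$. For each $z\in K$ pick $r_z<\min\{\delta,r_0(z)\}$ with $\mathcal P_{r_z}(z)\subset V$.

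Now apply the Vitali covering lemma. The sets $\mathcal P_r$ are comparable to $d_{\para}$-balls, so a fixed dilation $\mathcal P_{5r}$ of a selected set covers every set $\mathcal P_{r'}$ with $r'\le 2r$ that meets it. This is the reason, noted before the proposition, for using centred sets: the property fails for backward cylinders. By compactness, finitely many $\mathcal P_{r_z}$ cover $K$. Select a disjoint subfamily $\{\mathcal P_{r_i}(z_i)\}$ with $K\subset\bigcup_i\mathcal P_{5r_i}(z_i)$. Then
\[
\sum_i(5r_i)^{5/3}\le 5^{5/3}c_{\eps_*}^{-1}\sum_i\int_{\mathcal P_{r_i}(z_i)}h\le C\sigma .
\]
This gives $\mathcal H^{5/3}_{\para,10\delta}(K)\le C\sigma$. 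Letting $\sigma\to0$ and then $\delta\to0$ yields the claim.

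The main point to check is that the constant in \eqref{eq:joint-density} is uniform in $z_0$ and that the threshold radius is positive at each point; only the latter is needed, since each ball uses its own small radius. This follows from \eqref{eq:failed-smallness}, which holds for all sufficiently small $r$ at each singular point, together with the uniform chart comparability. A second, mild point is the ratio between a $\mathcal P$-ball and the corresponding $d_{\para}$-ball. That ratio is a fixed constant, so the dilation factor $5$ may need to be replaced by a larger absolute constant, which affects only $C$.
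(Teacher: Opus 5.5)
Your proposal is correct and follows essentially the same route as the paper. Both arguments transfer \eqref{eq:joint-density} from backward cylinders to the centred sets $\mathcal P_r$, select a disjoint subfamily by the $5r$-covering lemma, and conclude from disjointness, $|K|=0$ and absolute continuity of $\int h$. Your finite subcover replaces the paper's countable fine-cover selection, and your open set $V$ with $\int_V h<\sigma$, chosen independently of the mesh $\delta$, plays the role of the paper's shrinking neighbourhood $N_\delta^{\para}(K)$. Your final caveat about enlarging the factor $5$ is unnecessary, because $\mathcal P_r(t,x)$ is exactly the open $d_{\para}$-ball of radius $r$.
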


\begin{proof}
Extend
\[
 g:=|u|^{10/3}+|\pi|^{5/3}
\]
by zero outside $I\times\Omega$. The energy interpolation inequality gives
$u\in L^{10/3}(I\times\Omega)$. Moreover,
\[
 W^{1,15/14}(\Omega)\hookrightarrow L^{5/3}(\Omega),
\]
so \Cref{def:suitable} gives $\pi\in L^{5/3}(I\times\Omega)$. Hence
$g\in L^1(\R\times\R^3)$.

Fix $\delta>0$. For every $z\in K$, choose a radius
$r_z<\delta$ small enough that \eqref{eq:joint-density} holds. In fact the
same is true for every still smaller radius, so these balls form a fine cover
of $K$. The metric $5r$-covering lemma supplies a countable pairwise disjoint
subfamily
\[
 \{\mathcal P_{r_i}(z_i)\}_{i\in\N},
 \qquad
 K\subset\bigcup_i\mathcal P_{5r_i}(z_i).
\]
Since the backward cylinder $Q_{r_i}^-(z_i)$ is contained in
$\mathcal P_{r_i}(z_i)$, \eqref{eq:joint-density} and disjointness give
\begin{align}
 \mathcal H_{\para,C\delta}^{5/3}(K)
 &\le C\sum_i r_i^{5/3}\nonumber\\
 &\le C_{\eps_*}\sum_i\int_{Q_{r_i}^-(z_i)}g
 \le C_{\eps_*}\int_{N_\delta^{\para}(K)}g.
 \label{eq:vitali-estimate}
\end{align}
Here $N_\delta^{\para}(K)$ is the $d_{\para}$-neighbourhood of $K$; changing
$\delta$ by an absolute constant does not affect the conclusion.

The Lipschitz boundary has three-dimensional Lebesgue measure zero, hence
$K\subset I\times\partial\Omega$ has four-dimensional space--time Lebesgue
measure zero. Therefore
$\lvert N_\delta^{\para}(K)\rvert\to0$ as $\delta\downarrow0$. Absolute
continuity of the integral of $g$ makes the right-hand side of
\eqref{eq:vitali-estimate} tend to zero. This proves the proposition.
\end{proof}

Finally, exhaust the relatively closed set $\Sigma$ by the compact sets
\[
 K_j:=\Sigma\cap\bigl([1/j,T-1/j]\times\partial\Omega\bigr)
\]
(discarding empty members). Countable subadditivity and
\Cref{prop:vitali-covering} yield
\begin{equation}\label{eq:H-zero}
 \mathcal H^{5/3}_{\para}(\Sigma)=0.
\end{equation}
This is the scale-counting mechanism used in
\cite[Theorem 2.7]{Breit2025}, with the covering geometry and absolute
continuity step made explicit. No geometric term enters
\eqref{eq:vitali-estimate}, because the multiplier threshold is already
uniformly satisfied below $R_0$.

\section{Why geometry is not an additive excess}\label{sec:dimension}

An earlier form of the argument proposed the normalization
\begin{equation}\label{eq:wrong-normalization}
 \lambda_r^3=r^3E_r
 +r^9\left({\fintx}_{Q_r}|f|^{P_f}\right)^{3/P_f}
 +\eta_r^3.
\end{equation}
This is dimensionally inconsistent with the physical-scale Campanato
iteration. Indeed, a unit-scale estimate based on
\eqref{eq:wrong-normalization} would scale back to
\[
 E_{\tau r}\le C\tau^\theta\left[
 E_r+r^6\left({\fintx}_{Q_r}|f|^{P_f}\right)^{3/P_f}
 +r^{-3}\eta_r^3\right].
\]
Although \Cref{thm:cross-multiplier} gives
$\eta_r\lesssim r^{\gamma_b}$, the last term behaves like
\begin{equation}\label{eq:wrong-geometry-scale}
 r^{-3}\eta_r^3\lesssim r^{3\gamma_b-3},
\end{equation}
which diverges as $r\downarrow0$ for every finite $P_b>3$. The correct role
of geometry is therefore
\begin{equation}\label{eq:geometry-role}
 \eta_r\le\delta_*\quad\text{as an independent hypothesis},
\end{equation}
not an additive term in the fluid excess. The two-parameter contradiction in
\eqref{eq:double-smallness} makes the blow-up limit flat without altering the
dimension of $E_r$.

\section{Discussion}\label{sec:discussion}

The analysis separates the boundary exponent from the final regularity
exponent. The condition $P_b>3$ supplies a positive geometric decay
$\gamma_b$ and hence a uniform starting radius. The force exponent and the
flat homogeneous pressure determine the Campanato rate. This explains why a
term such as $3(1-3/P_b)$ should not appear in the final minimum of decay
exponents.

The earlier off-diagonal resolvent strategy would require a domain-uniform
nonlocal boundary Caccioppoli estimate. The present pressure separation is
strictly local. It first obtains only the critical rough-coefficient estimate,
then rewrites the localized equation as a flat Stokes system. The rough
coefficient errors disappear in
$L_t^{5/3}W_x^{1,15/14}\hookrightarrow L^{5/3}$, whereas the cutoff
commutator is moved into a flat homogeneous problem before any exponent is
raised. Thus the argument never asks the multiplier boundary to support
$W_x^{2,Q}$ regularity for $Q>15/4$.

The Piola identity is also more than a notational convenience. It writes the
coefficient-divergence error and the annular cutoff divergence as two exact
compactly supported fluxes. Consequently their time derivatives lie in the
required homogeneous negative Sobolev space, their initial traces vanish, and
they are separately mean-free. This permits the three flat Stokes problems in
\Cref{prop:pressure-separation} to be solved in one uniqueness class without
transferring a time-dependent mean between them. It preserves both the support
property of the homogeneous data and the smallness of the critical data.

The final local-to-global step uses two different geometric decompositions.
For regularity, the relevant dichotomy is whether a cylinder radius is larger
or smaller than the distance of its centre to the boundary; this is what
allows boundary-centred and interior excess estimates to meet without losing
the exponent. For the singular set, the relevant objects are symmetric
parabolic metric balls, even though the PDE density estimate is obtained on
backward cylinders. Separating those roles removes a minor but genuine
covering ambiguity.

The proof therefore closes at the differentiability threshold $P_b>3$:
geometry supplies the uniform starting scale, critical flat localization
removes the rough coefficients before any exponent is raised, and the
interior--boundary patching converts the boundary decay into a full relative
neighbourhood regularity statement. The pressure separation and the
parabolic covering are formulated as independent propositions so that their
hypotheses can be checked without referring to the contradiction sequence.

\section*{Disclosure statement}
The author reports no conflict of interest.

\section*{Data availability statement}
No data were used in the research described in this article.

\end{document}